\documentclass[11pt,twoside]{amsart}

\usepackage[
  paper=a4paper,
  headsep=15pt,text={155mm,230mm},centering
]{geometry}

\usepackage[bookmarks]{hyperref}
\usepackage{amssymb,amsxtra}
\usepackage{graphicx,xcolor}
\usepackage{float,array,calc,booktabs,stackrel,url}
\usepackage{enumitem}\setlist[enumerate,1]{font=\upshape}
\usepackage{mathtools}
\usepackage{tikz}
\usetikzlibrary{
  cd,
  calc,
  positioning,
  arrows,
  decorations.pathreplacing,
  decorations.markings,
}
\usepackage[mode=buildnew]{standalone}

\iftrue
    
    \makeatletter
    \def\@settitle{%
      \vspace*{-10pt}
      \begin{flushleft}%
        \LARGE\bfseries
        \strut\@title\strut
      \end{flushleft}%
    }
    \def\@setauthors{%
      \begingroup
      \def\thanks{\protect\thanks@warning}%
      \trivlist
      \raggedright
      \large \@topsep27\p@\relax
      \advance\@topsep by -\baselineskip
    \item\relax
      \author@andify\authors
      \def\\{\protect\linebreak}%
      \authors
      \ifx\@empty\contribs
      \else
      ,\penalty-3 \space \@setcontribs
      \@closetoccontribs
      \fi
      \normalfont
      \endtrivlist
      \endgroup
    }
    \def\@setaddresses{\par
      \nobreak \begingroup
      \small\raggedright
      \def\author##1{\nobreak\addvspace\smallskipamount}%
      \def\\{\unskip, \ignorespaces}%
      \interlinepenalty\@M
      \def\address##1##2{\begingroup
        \par\addvspace\bigskipamount\noindent
        \@ifnotempty{##1}{(\ignorespaces##1\unskip) }%
        {\ignorespaces##2}\par\endgroup}%
      \def\curraddr##1##2{\begingroup
        \@ifnotempty{##2}{\nobreak\noindent\curraddrname
          \@ifnotempty{##1}{, \ignorespaces##1\unskip}\/:\space
          ##2\par}\endgroup}%
      \def\email##1##2{\begingroup
        \@ifnotempty{##2}{\nobreak\noindent E-mail address%
          \@ifnotempty{##1}{, \ignorespaces##1\unskip}\/:\space
          \ttfamily##2\par}\endgroup}%
      \def\urladdr##1##2{\begingroup
        \def~{\char`\~}%
        \@ifnotempty{##2}{\nobreak\noindent\urladdrname
          \@ifnotempty{##1}{, \ignorespaces##1\unskip}\/:\space
          \ttfamily##2\par}\endgroup}%
      \addresses
      \endgroup
      \global\let\addresses=\@empty
    }
    \def\@setabstracta{%
      \ifvoid\abstractbox
      \else
      \skip@17pt \advance\skip@-\lastskip
      \advance\skip@-\baselineskip \vskip\skip@
      \box\abstractbox
      \prevdepth\z@ 
      \vskip-28pt
      \fi
    }
    \renewenvironment{abstract}{%
      \ifx\maketitle\relax
      \ClassWarning{\@classname}{Abstract should precede
        \protect\maketitle\space in AMS document classes; reported}%
      \fi
      \global\setbox\abstractbox=\vtop \bgroup
      \normalfont\small
      \list{}{\labelwidth\z@
        \leftmargin0pc \rightmargin\leftmargin
        \listparindent\normalparindent \itemindent\z@
        \parsep\z@ \@plus\p@
        
      }%
    \item[\hskip\labelsep\bfseries\abstractname.]%
    }{%
      \endlist\egroup
      \ifx\@setabstract\relax \@setabstracta \fi
    }

    \def\ps@headings{\ps@empty
      \def\@evenhead{%
        \setTrue{runhead}%
        \normalfont\scriptsize
        \rlap{\thepage}\hfill
        \def\thanks{\protect\thanks@warning}%
        \leftmark{}{}}%
      \def\@oddhead{%
        \setTrue{runhead}%
        \normalfont\scriptsize
        \def\thanks{\protect\thanks@warning}%
        \rightmark{}{}\hfill \llap{\thepage}}%
      \let\@mkboth\markboth
    }\ps@headings

    \def\section{\@startsection{section}{1}%
      \z@{-1.4\linespacing\@plus-.5\linespacing}{.8\linespacing}%
      {\normalfont\bfseries\Large}}
    \def\subsection{\@startsection{subsection}{2}%
      \z@{-.8\linespacing\@plus-.3\linespacing}{.5\linespacing\@plus.2\linespacing}%
      {\normalfont\bfseries\large}}
    \def\subsubsection{\@startsection{subsubsection}{3}%
      \z@{.7\linespacing\@plus.2\linespacing}{-1.5ex}%
      {\normalfont\bfseries}}
    \def\paragraph{\@startsection{paragraph}{4}%
      \z@{.7\linespacing\@plus.2\linespacing}{-1.5ex}%
      {\normalfont\itshape}}
    \def\@secnumfont{\bfseries}

    \renewcommand\contentsnamefont{\bfseries}
    \def\@starttoc#1#2{\begingroup
      \setTrue{#1}%
      \par\removelastskip\vskip\z@skip
      \@startsection{}\@M\z@{\linespacing\@plus\linespacing}%
      {.5\linespacing}{
        \contentsnamefont}{#2}%
      \ifx\contentsname#2%
      \else \addcontentsline{toc}{section}{#2}\fi
      \makeatletter
      \@input{\jobname.#1}%
      \if@filesw
      \@xp\newwrite\csname tf@#1\endcsname
      \immediate\@xp\openout\csname tf@#1\endcsname \jobname.#1\relax
      \fi
      \global\@nobreakfalse \endgroup
      \addvspace{32\p@\@plus14\p@}%
      \let\tableofcontents\rela\x
    }
    \def\contentsname{Contents}
    \def\l@section{\@tocline{2}{.5ex}{0mm}{5pc}{}}
    \def\l@subsection{\@tocline{2}{0pt}{2em}{5pc}{}}
    \makeatother

\fi

\def\to{\mathchoice{\longrightarrow}{\rightarrow}{\rightarrow}{\rightarrow}}
\makeatletter
\newcommand{\shortxra}[2][]{\ext@arrow 0359\rightarrowfill@{#1}{#2}}
\def\longrightarrowfill@{\arrowfill@\relbar\relbar\longrightarrow}
\newcommand{\longxra}[2][]{\ext@arrow 0359\longrightarrowfill@{#1}{#2}}
\renewcommand{\xrightarrow}[2][]{\mathchoice{\longxra[#1]{#2}}%
  {\shortxra[#1]{#2}}{\shortxra[#1]{#2}}{\shortxra[#1]{#2}}}
\makeatother

\makeatletter
\def\addtagsub#1{\let\oldtf=\tagform@\def\tagform@##1{\oldtf{##1}\hbox{$_{#1}$}}}
\makeatother

\makeatletter
\def\Nopagebreak{\@nobreaktrue\nopagebreak}
\makeatother

\makeatletter
\newtheoremstyle{theorem-giventitle}
        {}{}              
        {\itshape}                      
        {}                              
        {\bfseries}                     
        {.}                             
        {\thm@headsep}                             
        {\thmnote{\bfseries#3}}
\newtheoremstyle{theorem-givenlabel}
        {}{}              
        {\itshape}                      
        {}                              
        {\bfseries}                     
        {.}                             
        {\thm@headsep}                             
        {\thmname{#1}~\thmnumber{#3}\setcurrentlabel{#3}}
\newtheoremstyle{definition-giventitle}
        {}{}              
        {}                      
        {}                              
        {\bfseries}                     
        {.}                             
        {\thm@headsep}                             
        {\thmnote{\bfseries#3}}
\def\setcurrentlabel#1{\gdef\@currentlabel{#1}}
\makeatother

\newtheorem{theorem}{Theorem}[section]
\newtheorem{theoremalpha}{Theorem}

\newtheorem{lemma}[theorem]{Lemma}

\theoremstyle{definition}
\newtheorem{definition}[theorem]{Definition}

\newtheorem{remark}[theorem]{Remark}

\newtheorem*{case2'}{Case 2$'$}

\theoremstyle{theorem-giventitle}
\newtheorem{theorem-named}{}
\theoremstyle{theorem-givenlabel}
\newtheorem{theorem-labeled}{Theorem}

\theoremstyle{definition-giventitle}
\newtheorem{definition-named}{}
\newtheorem{conjecture-named}{}
\newtheorem{case-named}{}

\numberwithin{equation}{section}

\def\Z{\mathbb{Z}}
\def\R{\mathbb{R}}
\def\Q{\mathbb{Q}}
\def\C{\mathbb{C}}
\def\P{\mathbb{P}}

\def\cN{\mathcal{N}}

\def\tilde{\widetilde}

\def\sm{\smallsetminus}

\def\Im{\operatorname{Im}}

\DeclareMathOperator\Hom{Hom}

\DeclareMathOperator\sign{sign}

\def\rank{\operatorname{rank}}

\def\rhot{\rho^{(2)}}
\def\lt{L^2}

\def\L{\Lambda}
\def\sn{\mathrm{sn}}
\def\dsn{\mathrm{dsn}}
\def\wdsn{\mathrm{wdsn}}
\def\gds{g_{ds}}
\def\gdsx{g_{ds}^X}

\makeatletter

\begin{document}

\title[Double slice genus]{Doubly slicing knots and embedding 3-manifolds in 4-manifolds}

\author{Se-Goo Kim}
\address{Department of Mathematics\\
Kyung Hee University\\
Seoul 02447\\
Republic of Korea
}
\email{sgkim@khu.ac.kr}

\author{Taehee Kim}
\address{Department of Mathematics\\
  Konkuk University\\
  Seoul 05029\\
  Republic of Korea
}
\email{tkim@konkuk.ac.kr}

\thanks{The second-named author was supported by the National Research Foundation of Korea(NRF) grant funded by the Korea government(MSIT) (No.~RS-2023-NR076429).}

\def\subjclassname{\textup{2010} Mathematics Subject Classification}
\expandafter\let\csname subjclassname@1991\endcsname=\subjclassname
\expandafter\let\csname subjclassname@2000\endcsname=\subjclassname
\subjclass{57K10, 57N70
}
\keywords{Knot, doubly slice, double slice genus}

\begin{abstract} 
For a knot $K$ in the 3--sphere and a simply connected closed 4-manifold $X$, we define the $X$--double slice genus of  $K$, extending the notion from the case when $X$ is the 4--sphere. We show that for each integer $n$, there exists an algebraically doubly slice and ribbon knot $K$ whose $X$--double slice genus is greater than $n$. 
Our arguments use new $\lt$--signature obstructions to embedding closed 3-manifolds with infinite cyclic first homology into closed 4-manifolds with infinite cyclic fundamental group, in a way that preserves first homology. 

We also extend the concept of the superslice genus of a knot $K$ to simply connected 4-manifolds and show that there exist doubly slice knots whose generalized superslice genera are arbitrarily large. 

Furthermore, we define the double stabilizing number of a knot, extending the stabilizing number introduced by Conway and Nagel, and show that this invariant can also be arbitrarily large. 
\end{abstract}
\maketitle

\section{Introduction}\label{section:introduction}
In the study of topology in three and four dimensions, many fundamental questions arise concerning the interactions between knots in the 3--sphere $S^3$ and surfaces in the 4--sphere~$S^4$. 
Among these, questions involving slice knots are of primary importance; a knot is \emph{slice} if it bounds a topologically locally flat disk in the 4-ball $D^4$. Equivalently, a knot is slice if it arises as the transverse intersection of a topologically locally flat 2--sphere and the standard~$S^3$ in~$S^4$. 

An oriented, closed, locally flatly embedded surface in $S^4$ is \emph{unknotted} if it bounds a $3$--dimensional handlebody with a locally flat embedding in $S^4$.
Fox \cite{Fox:1962-2} asked which slice knots can arise as the transverse intersection of an unknotted 2--sphere and the standard~$S^3$ in~$S^4$. Such a knot is called \emph{doubly slice}. 

\subsubsection*{Conventions} In this paper, all manifolds are oriented, compact, connected, and topological, and all embeddings are locally flat unless otherwise specified. 

\bigskip
To measure how close a knot is to being slice or doubly slice, we use the 4--\emph{genus} (also called the \emph{slice genus}) or the \emph{double slice genus}. 
The 4--\emph{genus} of a knot $K$ in~$S^3$, denoted by $g_4(K)$, is the minimal genus of a properly embedded surface in $D^4$ with boundary~$K$. 
The \emph{double slice genus} of $K$ \cite[Section~5]{Livingston-Meier:2015-1} is defined as
\[
\gds(K)=\text{min}\{\,\text{genus}(F) \mid F \text{ is an unknotted surface in } S^4,\, F\cap S^3=K \}.
\] 
A knot $K$ is doubly slice if and only if $\gds(K)=0$, and 
\[
2g_4(K)\le \gds(K)\le 2g_3(K)
\] 
(see \cite[Section~5]{Livingston-Meier:2015-1}), where $g_3(K)$ denotes the Seifert genus of $K$.

The notion of the 4--genus of a knot $K$ extends to the genus of $K$ in a 4-manifold~$X$ with boundary $S^3$. The \emph{$X$--genus} of $K$, denoted by~$g_X(K)$, is defined as the minimal genus of a properly embedded surface in $X$ with boundary~$K$. Kasprowski, Powell, Ray, and Teichner \cite[Corollary~1.15]{Kasprowski-Powell-Ray-Teichner:2024-01} showed that $g_X(K)=0$ for every knot $K$ if~$X$ is a simply connected 4-manifold not homeomorphic to the complement of a 4--ball in $S^4$, $\C\P^2$, or~$*\C\P^2$. 

A further refinement involves fixing a homology class represented by an embedded surface bounding $K$. A knot $K$ in $S^3$ is {\it $H$--slice} in a 4-manifold $X$ with boundary $S^3$ if it bounds a null-homologous disk properly embedded in~$X$. 
(A properly embedded surface $F$ in a 4-manifold $W$ is \emph{null-homologous} if $[F,\partial F]=0\in H_2(W, \partial W)$. Henceforth, (co)homology groups are understood with integer coefficients unless mentioned otherwise.) For instance, every properly embedded surface in $D^4$ is null-homologous. Klug and Ruppik \cite[Theorem~4.4]{Klug-Ruppik:2021-1} showed that for each 4-manifold $X$ with boundary $S^3$, there exists a knot that is not $H$--slice in $X$.

\subsubsection*{Double slice genus}

In this paper, we extend the notion of the double slice genus of $K$ to the double slice genus of $K$ in (pairs of) simply connected 4-manifolds. 
A surface~$S$ (properly) embedded in a 4-manifold $X$ is called a {\it $\Z$--surface} if $\pi_1(X\sm S)\cong\Z$, and a $\Z$--surface~$S$ is called a {\it $\Z$--sphere} if $S$ is homeomorphic to~$S^2$. 
For instance, an unknotted surface in $S^4$ is a $\Z$--surface in $S^4$. 

We remark that a $\Z$--surface in a simply connected 4-manifold is null-homologous \cite[Lemma~5.1]{Conway-Powell:2023-1}. In this paper, every 4-manifold will be assumed to be simply connected, and hence every $\Z$--surface will be null-homologous.

Let $X_i$ be simply connected 4-manifolds with boundary $S^3$, $i=1,2$. 
We say that a knot~$K$ is \emph{$(X_1,X_2)$--doubly slice} if there exist properly embedded disks $D_i\subset X_i$ with $\partial D_i=K$, $i=1,2$, such that $D_1\cup_K D_2$ is a $\Z$--sphere in $X_1\cup_{S^3} X_2$.
The \emph{$(X_1,X_2)$--double slice genus} of~$K$ is defined as 
\[
g_{X_1,X_2}(K)=\text{min}\{\,\text{genus}(F_1\cup_K F_2) \mid F_i\subset X_i \text{ and } F_1\cup_K F_2 \text{ is a } \Z\text{--surface in }X_1\cup_{S^3} X_2\},
\] 
where each $F_i$ runs over all properly embedded surfaces in $X_i$ with boundary $K$, $i=1,2$. 
Clearly, $g_{X_1,X_2}(K)=0$ if and only if $K$ is $(X_1,X_2)$--doubly slice.

Let $X$ be a simply connected \emph{closed} 4-manifold. Notice that if $X=X_1\cup_{S^3}X_2$ for 4-manifolds $X_i$ with boundary $S^3$, $i=1, 2$, then each $X_i$ is simply connected. We say that a knot $K$ is \emph{$X$--doubly slice} if there exist 4-manifolds $X_i$ with boundary $S^3$, $i=1,2$, such that $X=X_1\cup_{S^3} X_2$ and $K$ is $(X_1,X_2)$--doubly slice. 
The \emph{$X$--double slice genus of $K$} is defined as
\begin{align*}
\gdsx(K)=\text{min}\{g_{X_1,X_2}(K)\mid\, & X_i\text{ are 4-manifolds with boundary }S^3, i=1,2, \\
& \text{ such that } X=X_1\cup_{S^3}X_2\}.
\end{align*}
One can readily see that a knot $K$ is $X$--doubly slice if and only if $\gdsx(K)=0$. We have $\gds^{S^4}(K)\le \gds(K)$ since every unknotted surface in $S^4$ is a $\Z$--surface. We will also show that $\gdsx(K)\le \gds^{S^4}(K)$ for every simply connected closed 4-manifold $X$ in Theorem~\ref{theorem:inequalities-double-slice-genus}.

\begin{remark}\label{remark:unknotting-conjecture}
\begin{enumerate}
\item If a surface $F$ is unknotted in $S^4$, then $\pi_1(S^4\sm F)\cong \Z$. 
The \emph{unknotting conjecture} asserts the converse, and the unknotting conjecture holds when $\text{genus}(F)\ne 1,2$ (see \cite[Theorem~11.7A]{Freedman-Quinn:1990-1} for $\text{genus}(F)=0$, \cite[Theorem~1.1]{Conway-Powell:2023-1} for $\text{genus}(F)\ge 3$, and \cite[Seciton~1.1]{Conway-Powell:2023-1} for related discussions). 
It follows that a knot is doubly slice if and only if it is $S^4$--doubly slice in the above sense. 
Moreover, $\gds^{S^4}(K)\le \gds(K)$, and if the unknotting conjecture is true, then $\gds^{S^4}(K)= \gds(K)$.
\item In the above definitions of $g_{X_1,X_2}(K)$ and $\gdsx(K)$, the surfaces $D_i$ and $F_i$ in $X_i$ are  null-homologous, $i=1, 2$, since $D_1\cup_K D_2$ and $F_1\cup_K F_2$ are $\Z$--surfaces in a simply connected 4-manifold and hence null-homologous (see \cite[Lemma~5.1]{Conway-Powell:2023-1}).
\end{enumerate}
\end{remark}

We will give lower bounds on $\gdsx(K)$ and show that it can be arbitrarily large by using new obstructions to embedding 3-manifolds with infinite cyclic first homology in 4-manifolds with infinite cyclic fundamental group (Theorem~\ref{theorem:H1-embedding}), which may be of independent interest.

\subsubsection*{Obstructions to embedding 3-manifolds in 4-manifolds}

Let $M$ be a closed 3-manifold and $W$ be a closed 4-manifold. We say that an embedding $M\hookrightarrow W$ is an \emph{$H_1$--embedding} if it induces an isomorphism $H_1(M)\xrightarrow{\cong} H_1(W)$. Let $M(K)$ denote the zero-framed surgery on a knot $K$ in $S^3$. We will show that $H_1$--embedding is closely related to the $X$--double slice genus, and using it we will give lower bounds on the $X$--double slice genus. In Theorem~\ref{theorem:doubly-slice-and-H_1-embedding}, we also show that a knot $K$ is doubly slice if and only if there exists an $H_1$--embedding of~$M(K)$ in~$S^1\times S^3$.

For a closed 3-manifold $M$ and a group homomorphism $\phi\colon \pi_1(M)\to G$,  the \emph{Cheeger--Gromov--von Neumann $\rhot$-invariant}, denoted by $\rhot(M,\phi)$, is defined as a $\lt$--signature defect of a 4-manifold over which $\phi$ extends (see Section~\ref{section:l2-signatures} for more about $\rhot$-invariants). 

Using $\rhot$-invariants  we obtain the following $H_1$--embedding obstructions. 
In the following theorem, a group~$G$ will be assumed to be \emph{amenable} and in \emph{Strebel's class $D(R)$} with $R=\Q$ or $\Z_p$ for a prime $p$, and refer to Section~\ref{section:embedding} for the definitions and properties of these terms.
For a space~$X$, let $b_i(X)=\rank_\Z H_i(X)$.

\begin{theoremalpha}\label{theorem:H1-embedding}
Let $M$ be a closed 3-manifold with $H_1(M)\cong \Z=\langle t\rangle$ and $W$ be a closed 4-manifold with $\pi_1(W)\cong \Z$. If $f\colon M\to W$ is an $H_1$--embedding, then there exist 4-manifolds~$W_1$ and~$W_2$ for which the following hold: let $r=b_2(W)$ and $\L=\Q[t^{\pm 1}]$.
\begin{enumerate}
    	\item $f(M)=\partial W_1=\partial W_2$ and $W=W_1\cup_{f(M)} W_2$.
	\item For $i=1,2$, let $f_i\colon M\to W_i$ be the inclusion map induced from $f$. Then, $f_i$ induces an isomorphism $(f_i)_*\colon H_1(M)\xrightarrow{\cong} H_1(W_i)$, $i=1, 2$. 
	\item There is an exact sequence
	\[
	\L^r\to H_1(M;\L)\xrightarrow{((f_1)_*,(f_2)_*)} H_1(W_1;\L)\oplus H_1(W_2;\L)\to 0.
	\]
	\item Let $G$ be an amenable group lying in Strebel's class $D(R)$ where $R=\Q$ or $\Z_p$ with $p$ a prime. If $\phi\colon \pi_1(M)\to G$ is a homomorphism that extends over either $W_1$ or $W_2$, then 
	\[
	\left|\rhot(M,\phi)\right|\le 2r.
	\]
	\end{enumerate}
\end{theoremalpha}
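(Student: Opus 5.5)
Since $H_1(M)\cong\Z$ maps isomorphically onto $H_1(W)\cong\Z$, the composite $\pi_1(W)\to H_1(W)\to \Z$ gives a class in $H^1(W)$ restricting to a generator of $H^1(M)$. The first step is to show that $f(M)$ separates $W$. Suppose the complement of $f(M)$ were connected. Then a closed loop in $W$ meeting $f(M)$ transversally in one point would have intersection number $1$ with $[f(M)]\in H_3(W)$. This gives a class in $H^1(W)$ dual to $[M]$, i.e. the Poincar\'e dual of $[f(M)]$. That class evaluates to $0$ on all loops lying in $M$. Since $H_1(M)\to H_1(W)$ is onto, it would vanish on all of $H_1(W)$, yet it is nonzero on the transverse loop. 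This contradiction shows $W=W_1\cup_{f(M)}W_2$, which is (1).

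For (2), I will use Mayer--Vietoris:
\[
H_1(M)\to H_1(W_1)\oplus H_1(W_2)\to H_1(W)\to \tilde H_0(M)=0.
\]
The composite $H_1(M)\to H_1(W)$ is an isomorphism. Hence $(a,b)\mapsto a+b$ splits, and $H_1(W_1)\oplus H_1(W_2)\cong \Z\oplus \operatorname{coker}$. I expect to show that each $H_1(M)\to H_1(W_i)$ is surjective. To do this I will apply Mayer--Vietoris to $\pi_1$ via van Kampen: $\pi_1(W)=\pi_1(W_1)*_{\pi_1(M)}\pi_1(W_2)$ is abelian $\Z$, which forces both inclusions to be surjective on $H_1$. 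Injectivity holds because the composite to $H_1(W)$ is injective.

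For (3), I will run Mayer--Vietoris with $\L$ coefficients, coming from the cover induced by $\pi_1(W)\to\Z$:
\[
H_2(W;\L)\to H_1(M;\L)\to H_1(W_1;\L)\oplus H_1(W_2;\L)\to H_1(W;\L).
\]
Here $H_1(W;\L)=H_1(\tilde W;\Q)=0$ because $\pi_1(W)=\Z$, so $\tilde W$ is simply connected. For the left term I need $H_2(W;\L)$ to be generated by $r$ elements. Euler characteristic computations give $\chi(W)=2+r-2=r$ (using $b_1=b_3=1$), and also $\chi(W)=\sum(-1)^i\rank_\L H_i(W;\L)$. We have $H_0(W;\L)=\Q$ (torsion), $H_1=0$, and $H_3(W;\L)\cong \overline{H^1(W;\L)}$, which is torsion/zero. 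Also $H_4=0$. So $\rank H_2(W;\L)=r$. By the UCSS and duality, $H_2(W;\L)\cong \overline{H^2(W;\L)}\cong \Hom_\L(H_2(W;\L),\L)$ up to Ext of $H_1=0$, so it is free of rank $r$. This gives (3).

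Part (4) is the main obstacle. Say $\phi$ extends over $W_1$ via $\psi:\pi_1(W_1)\to G$. Then $\rhot(M,\phi)=\sign^{(2)}_G(W_1)-\sign(W_1)$. Each signature is bounded in absolute value by the rank of the relevant intersection form's nondegenerate part, so it suffices to bound $b_2$-type quantities by $r$. For the ordinary signature I will use that $H_1(M;\Q)\to H_1(W_1;\Q)$ is an isomorphism and Mayer--Vietoris/duality to get $\dim\operatorname{Im}(H_2(W_1;\Q)\to H_2(W_1,M;\Q))\le r$. For the $L^2$ part the key input is that $G\in D(R)$ and the map $M\to W_1$ is an $R$-homology isomorphism on $H_1$ over the relevant coefficients. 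By Strebel-type results (as in Cha--Orr), $H_1(M;\mathcal N G)\to H_1(W_1;\mathcal N G)$ is then controlled, and $L^2$-Betti numbers satisfy $b^{(2)}_1(W_1)=b^{(2)}_1(M)$-style bounds. Amenability then lets me compare $L^2$-Betti numbers with ordinary ones (via L\"uck's dimension estimates over amenable groups and $\chi$). This should give $\dim^{(2)}$ of the $L^2$ intersection form on $W_1$ at most $r$. Combining the two bounds gives $|\rhot|\le r+r=2r$. The delicate point is checking that the Cha--Orr amenable-signature machinery applies when $H_1(M)\to H_1(W_1)$ is merely an isomorphism rather than an $R$-homology cobordism. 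There I would first try to bound $b_2^{(2)}(W_1)\le b_2(W_1;R)$, using the $D(R)$ property and a chain-complex rank argument, and then bound $b_2(W_1)\le r$ from Mayer--Vietoris.
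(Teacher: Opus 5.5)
Your proposal follows the paper's proof for (1)--(3). In (1) your transverse-loop/Poincar\'e-dual argument is a more careful version of the paper's one-line claim that $f(M)$ separates $W$. In (3) the Euler-characteristic computation $\rank_\L H_2(W;\L)=r$ and the identification $H_2(W;\L)\cong\Hom_\L(H_2(W;\L),\L)$ are exactly what the paper does. In (2) the van Kampen detour is unnecessary, and the fact that $\pi_1(W)$ is abelian is not what drives the argument. The Mayer--Vietoris sequence you already wrote suffices. Write $i_k\colon H_1(M)\to H_1(W_k)$ and $j_k\colon H_1(W_k)\to H_1(W)$, so that $f_*=j_1i_1=j_2i_2$. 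For $a\in H_1(W_1)$, the element $(a-i_1f_*^{-1}j_1(a),0)$ lies in $\ker(j_1+j_2)=\{(i_1y,-i_2y)\}$. Then $f_*(y)=j_2i_2(y)=0$ forces $y=0$, so $i_1$ is onto.

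The only real gap is in (4), where you leave the key step as something to ``try''. The route you list last as a fallback is precisely the paper's argument: $|\sign^{(2)}_G(W_1)|\le\dim^{(2)}H_2(W_1;\cN G)\le\dim_R H_2(W_1;R)\le b_2(W_1)\le r$, together with $|\sign(W_1)|\le b_2(W_1)$. The middle inequality does not need to be proved from scratch. It is Theorem~3.11(1) of \cite{Cha:2010-01}, which the paper cites exactly here. It applies to the finite free cellular chain complex of the $G$-cover of $W_1$ alone, for any amenable $G$ in $D(R)$. So the ``delicate point'' you raise does not occur: no homology-cobordism hypothesis on $M\hookrightarrow W_1$ is needed. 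The $H_1$-isomorphism enters only on the ordinary side, and the $b_1^{(2)}$ comparisons and $L^2$ Euler-characteristic argument can be dropped.

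When you write out the ordinary-side bound, two details need supplying; the paper passes over both quickly.
\begin{enumerate}
\item Mayer--Vietoris alone gives only $b_2(W_1)+b_2(W_2)\le r+1$, since the image of $H_2(M;\Q)\cong\Q$ could a priori be nonzero. It is in fact zero. Under Lefschetz duality, $\partial\colon H_3(W_i,M;\Q)\to H_2(M;\Q)$ is the restriction $H^1(W_i;\Q)\to H^1(M;\Q)$, which is an isomorphism. Hence $H_2(M;\Q)\to H_2(W_i;\Q)$ vanishes, and $b_2(W_1)+b_2(W_2)=r$.
\item For $R=\Z_p$ you need $\dim_{\Z_p}H_2(W_i;\Z_p)=b_2(W_i)$. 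This holds because $H_1(W_i)\cong\Z$ and $H_2(W_i)$ is torsion-free: its torsion is that of $H^3(W_i)\cong H_1(W_i,M)$, and $H_1(W_i,M)=0$.
\end{enumerate}
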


We relate Theorem~\ref{theorem:H1-embedding} to $g_{X_1,X_2}(K)$ and $g_X(K)$ in the following theorem. 
\begin{theoremalpha}\label{theorem:genus-to-embedding}
Let $K$ be a knot in $S^3$ and $g$ be a nonnegative integer. 
\begin{enumerate}
\item Let $X_i$ be simply connected 4-manifolds with boundary $S^3$, $i=1,2$, and let $X=X_1\cup_{S^3} X_2$. If $g_{X_1,X_2}(K)\le g$, then there exists a closed 4-manifold~$W$ with $\pi_1(W)\cong \Z$ and $H_2(W)\cong \Z^{2g+b_2(X)}$ such that there is an $H_1$--embedding of $M(K)$ in~$W$. 
\item Let $X$ be a simply connected closed 4-manifold. If $\gdsx(K)\le g$, then there exists a closed 4-manifold $W$ with $\pi_1(W)\cong \Z$ and $H_2(W)=\Z^{2g+b_2(X)}$ such that there is an $H_1$--embedding of $M(K)$ in~$W$. 
\end{enumerate}
\end{theoremalpha}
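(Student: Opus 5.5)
Part (2) follows immediately from part (1): if $\gdsx(K)\le g$, pick a decomposition $X=X_1\cup_{S^3}X_2$ with $g_{X_1,X_2}(K)\le g$; since $b_2(X)$ is intrinsic to $X$, part (1) yields the required $W$. So the work is in part (1).

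For (1), let $F=F_1\cup_K F_2\subset X$ be a $\Z$--surface of genus $h\le g$ with $F_i\subset X_i$ and $\partial F_i=K$. We may assume $h=g$ by stabilizing $F$ inside $X_1$ with trivial tubes. This keeps $\pi_1(X\sm F)\cong\Z$, since adding a trivial handle does not change the complement's fundamental group. Let $N(F)\cong F\times D^2$ be a tubular neighborhood, which is trivial because $F$ is null-homologous (Remark~\ref{remark:unknotting-conjecture}(2)). Let $E=X\sm \mathrm{int}\,N(F)$, so $\partial E=F\times S^1$. The standard surgery replaces $N(F)$ by $H\times S^1$, where $H$ is a 3-dimensional handlebody with $\partial H=F$, glued so that the meridian $\{pt\}\times S^1$ matches the $S^1$ factor. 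Set $W=E\cup (H\times S^1)$.

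Next I would check the invariants of $W$.
\begin{enumerate}
\item By van Kampen, $\pi_1(W)$ is $\pi_1(E)\cong\Z$ (generated by the meridian) amalgamated with $\pi_1(H)\times\Z$ over $\pi_1(F)\times\Z$. Since $\pi_1(F)\to\pi_1(E)$ is trivial (the longitudes of $F$ push off into the complement and commute with the meridian), and $\pi_1(F)\to\pi_1(H)$ is onto, we get $\pi_1(W)\cong\Z$.
\item Euler characteristic gives $\chi(W)=\chi(X)-\chi(F\times D^2)+\chi(H\times S^1)=\chi(X)-(2-2g)$. Also $b_1=b_3=1$ by Poincaré duality.
\item Hence $b_2(W)=\chi(W)=2+b_2(X)-2+2g=b_2(X)+2g$. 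Torsion-freeness of $H_2(W)$ follows since $H_1(W)\cong\Z$, so $H_2(W)\cong\Z^{2g+b_2(X)}$.
\end{enumerate}

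To embed $M(K)$, split along $S^3$: $S^3\cap E$ is the exterior $E_K$ of $K$, with boundary torus $K\times S^1\subset F\times S^1$. Choose $H$ so that the arc-disk decomposition is compatible: $H=H_1\cup_{D} H_2$, where $H_i$ is a handlebody bounded by $F_i\cup D$ and $D$ is a disk with $\partial D=K$. This works because $F$ is cut by $K$ into two genus-$g_i$ surfaces with one boundary component each; take boundary-connected handlebodies on each side meeting in a disk $D$ spanning $K$. Then $(S^3\cap E)\cup (D\times S^1)\subset W$ is exactly $E_K$ glued to $D^2\times S^1$ along the torus. The meridian of $D$ is glued to the $S^1$-factor curve $\{pt\}\times S^1$, which is the meridian of $K$, and $\partial D$ is glued to a pushoff of $K$ in $F$, which is the Seifert-framed ($0$-framed) longitude because $F$ is null-homologous. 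So this gives a copy of $M(K)$, since $0$-surgery fills the longitude. The meridian of $K$ generates both $H_1(M(K))$ and $\pi_1(W)$, so the embedding is an $H_1$--embedding.

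The main obstacle is this last step: verifying that the framing induced by $F\cap S^3$ on $K$ is the zero framing and that $D\times S^1$ caps correctly. This requires arranging $F$ to be transverse to $S^3$ with a product collar, and checking the pushoff linking number via null-homology of $F_i$ in $X_i$.
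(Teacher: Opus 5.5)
\textbf{Gap.} The proposal has one genuine gap: the claim that $\pi_1(F)\to\pi_1(E)$ is trivial. This map depends on the trivialization $N(F)\cong F\times D^2$, which you leave arbitrary. Trivializations form a torsor over $H^1(F;\Z)$, and twisting by $\beta$ shifts the class in $H_1(E)\cong\Z$ of the pushoff of a curve $\gamma\subset F$ by $\beta(\gamma)$ meridians. Your justification (``the pushoffs commute with the meridian'') holds for every framing, because $\pi_1(E)\cong\Z$ is abelian, so it proves nothing.

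The choice really matters. Suppose a curve $c\subset F$ bounding a meridian disk of $H$ has pushoff representing $\mu^k$ with $k\neq 0$. Then your van Kampen computation shows that $\pi_1(W)$ is cyclic, generated by $\mu$, with $\mu^k=1$. Hence $H_1(W)$ is finite and $b_1(W)=b_3(W)=0$, so your Euler characteristic count gives $b_2(W)=2g+b_2(X)-2$, and $M(K)\hookrightarrow W$ cannot be an $H_1$--embedding.

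The repair is the one the paper makes explicitly for $\tilde F_i\subset \tilde X_i$ (citing Cha). A given framing induces a homomorphism $H_1(F)\to H_1(E)\cong\Z$, i.e.\ a class $\alpha\in H^1(F;\Z)$. Twisting by $-\alpha$ gives a framing for which every pushoff is null in $H_1(E)$, and only then is your amalgamation argument valid.

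\textbf{The $0$-framing step.} This choice does not interfere with the step you singled out as the main obstacle, and that step is in fact routine. Since $K=\partial F_1$ separates $F$, we have $[K]=0\in H_1(F)$, so every trivialization of $N(F)$ induces the same framing on $K$. Take the pushoff $F_1'$ of $F_1$ given by a nonvanishing section, with $\partial F_1'=K'$. Capping $F_1$ and $F_1'$ off with Seifert surfaces in a collar shows that $\mathrm{lk}(K,K')$ is the self-intersection of $[F_1]\in H_2(X_1,\partial X_1)\cong H_2(X_1)$. This is $0$ because $F_1$ is null-homologous in $X_1$ (Remark~\ref{remark:unknotting-conjecture}(2)).

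\textbf{Comparison with the paper.} With the framing fixed, your argument is correct, and your $W$ is literally the paper's $W_1\cup_{M(K)}W_2$. The two $0$-framed $2$-handles attached along $K$, minus neighborhoods of their cores, glue to $(D\times[-1,1])\times S^1\subset H\times S^1$. The paper's handlebodies for $\tilde F_1,\tilde F_2$ together with $D\times[-1,1]$ form your $H=H_1\cup_D H_2$. Your global description buys a one-line Euler characteristic computation of $b_2(W)$ and a single van Kampen argument. The paper's two-sided construction makes $M(K)$ a common boundary automatically, with no framing check on $K$; instead it uses $\tilde F_i\cdot\tilde F_i=0$. It also provides the splitting $W=W_1\cup_{M(K)}W_2$ that is reused in the proof of Theorem~\ref{theorem:lower-bound-superslice-genus}.
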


Theorems \ref{theorem:H1-embedding} and \ref{theorem:genus-to-embedding} yield lower bounds on $\gdsx(K)$. Applying these bounds, we obtain the following theorem, which shows that $\gdsx(K)$ can be arbitrarily large for each simply connected closed 4-manifold $X$.
\begin{theoremalpha}\label{theorem:main-ribbon}
Let $g$ and $n$ be nonnegative integers. Then, there exists an algebraically doubly slice and ribbon knot $K$ in $S^3$ such that there is no $H_1$--embedding of $M(K)$ in any closed 4-manifold $W$ with $\pi_1(W)\cong \Z$ and $b_2(W)\le 2g+n$. 
In particular, such a knot $K$ satisfies $\gdsx(K)> g$ for every simply connected closed 4-manifold $X$ with $b_2(X)=n$.
\end{theoremalpha}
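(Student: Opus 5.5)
The plan is to combine Theorem~\ref{theorem:H1-embedding}(4) with a construction of ribbon, algebraically doubly slice knots whose $\rhot$-invariants, for every possible "half" of the Alexander module, are huge. The "in particular" clause then follows from Theorem~\ref{theorem:genus-to-embedding}(2): if $\gdsx(K)\le g$ with $b_2(X)=n$, we would get an $H_1$--embedding of $M(K)$ into a closed $W$ with $\pi_1(W)\cong\Z$ and $b_2(W)=2g+n$, which is excluded. So the work is the first assertion. Set $r=2g+n$. We need $K$ such that for every $W$ with $b_2(W)\le r$, and every decomposition $W=W_1\cup W_2$ as in Theorem~\ref{theorem:H1-embedding}, some $\phi\colon\pi_1(M(K))\to G$ extending over $W_1$ or $W_2$ has $|\rhot(M(K),\phi)|>2r$.

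For the construction, start from a ribbon, algebraically doubly slice knot $J$ whose rational Alexander module is $\L/\langle p(t)\rangle\oplus\L/\langle p(t^{-1})\rangle$, with $p$ irreducible and $p(t)\ne p(t^{-1})$ up to units. A natural choice is $J=R\#-R$ with $R$ a knot with Alexander polynomial $p(t)p(t^{-1})$ and a suitable genus-one Seifert matrix; alternatively use $9_{46}$-type ribbon knots with two metabolizer bands. Then take $N=2r+1$ (or more) such summands, or satellite by infection, replacing $J$ by $K=J(\eta_1,\eta_2;C,C')$. Here the $\eta_i$ are curves in the complement, representing generators of the two summands, and $C$ is a knot with $|\rho_0(C)|$ large, e.g.\ a connected sum of many trefoils. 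Infection along curves chosen so that the ribbon disks survive (each $\eta_i$ bounds in the ribbon disk complement, or we infect along both bands symmetrically) keeps $K$ ribbon. The Seifert form is unchanged, so $K$ is still algebraically doubly slice.

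The key algebraic step uses Theorem~\ref{theorem:H1-embedding}(3). We have the exact sequence $\L^r\to H_1(M;\L)\to H_1(W_1;\L)\oplus H_1(W_2;\L)\to 0$. If we take $K$ to be a connected sum of $N>r$ copies of the infected knot, then $H_1(M;\L)\cong\bigoplus_{j=1}^N(\L/p\oplus\L/\bar p)$. The image of $\L^r$ can kill at most $r$ cyclic summands. Hence for some summand index $j$, the kernel $P_i$ of the map to $H_1(W_i;\L)$ avoids the $j$-th block, and both maps are nonzero there. Concretely, the class $\eta$ of that summand survives in $H_1(W_1;\L)$ or $H_1(W_2;\L)$. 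This gives a map $\phi\colon\pi_1(M)\to\pi_1(M)/\pi_1(M)^{(2)}_{\mathcal R}\to G$, with $G$ a torsion-free amenable group in $D(\Q)$ built from $\Z$ and the quotient of the $\L$-module (e.g.\ $\L/p\rtimes\Z$ localized appropriately). This $\phi$ extends over $W_i$, obtained as the analogous quotient of $\pi_1(W_i)$ (via Cochran--Orr--Teichner / Cha's mixed-coefficient techniques), and sends the infection curve nontrivially.

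Finally we compute. By the standard infection formula, $\rhot(M(K),\phi)=\rhot(M(J^{\#N}),\phi')+\sum(\text{terms }\rho_0(C))$, summing over infection curves with nontrivial image. Since $\phi$ factors through a metabelian group with $\Z$-abelianization, the first term is bounded by a constant $C_J$ depending only on $J$ and $N$ (Cheeger--Gromov bound), while the surviving infection contributes $\rho_0(C)$. Choosing $C$ with $|\rho_0(C)|>2r+C_J$ gives $|\rhot(M(K),\phi)|>2r$, contradicting Theorem~\ref{theorem:H1-embedding}(4).

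The main obstacle is the middle step: showing that exactness in (3) forces some infection curve to map nontrivially under a $\phi$ that genuinely extends over $W_1$ or $W_2$. This must be done uniformly over all unknown $W$, including establishing that the coefficient system obtained from $H_1(W_i;\L)$ defines an amenable $D(\Q)$ target group. I would first try the "$N>r$ copies, pigeonhole on summands" argument above, together with the fact that $p\neq\bar p$ forces each surviving summand to map injectively into exactly one $W_i$.
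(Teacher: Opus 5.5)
Your skeleton is the paper's. The ``in particular'' clause comes from Theorem~\ref{theorem:genus-to-embedding}(2), and the main argument uses $N>r=2g+n$ infected copies of a $9_{46}$-type knot, a pigeonhole on the sequence of Theorem~\ref{theorem:H1-embedding}(3), a metabelian $\phi$ through $H_1(W_1;\L)\rtimes\Z$, and Theorem~\ref{theorem:H1-embedding}(4) played against the infection formula.

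\textbf{The gap: your construction is not ribbon.} You infect along curves generating \emph{both} summands $\L/p$ and $\L/\bar p$ (or ``both bands symmetrically''), by knots with large $\rho_0$.
\begin{enumerate}
\item Your own criterion, that each infection curve bounds in the complement of a ribbon disk, cannot be met by such a pair. For any slice disk $\Delta$, the kernel of $H_1(M;\L)\to H_1(D^4\sm\nu\Delta;\L)$ is self-annihilating for the Blanchfield form. For $9_{46}$ this forces it to be exactly one of the two summands.
\item In fact, running your metabelian $\rho$-argument with a slice-disk exterior in place of $W_1$ shows that $9_{46}$ with both band meridians infected this way is not even slice.
\item The base $R\#-R$ fails similarly: infecting a curve in the $R$-summand destroys the spun disk. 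That is exactly the non-ribbon construction of Theorem~\ref{theorem:main-vanishing-CG}.
\end{enumerate}
The fix is the paper's: with $R=9_{46}$, infect only along the meridian $\eta$ of the left band. After the band move cutting the left band, $\eta$ bounds a disk missing the resulting two-component unlink, so $R(\eta,C)$ is still ribbon. Since $\mathrm{lk}(\eta,R)=0$, the Seifert form is unchanged.

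\textbf{Consequences for the pigeonhole.} With only $(2t-1)$-curves infected, the pigeonhole must be run on the $(2t-1)$-primary part $A_{2t-1}\cong(\L/\langle 2t-1\rangle)^N$. It is a direct summand, so if $A_{2t-1}\subset\Im\partial_*$, its projection would be generated by $r$ elements, which is impossible for $N>r$. This is where coprimality of $p$ and $\bar p$ is used. It is not needed for your unjustified claim that a surviving summand injects into exactly one $W_i$. Also, you only get $(f_1)_*(\eta_j)\neq0$ \emph{or} $(f_2)_*(\eta_j)\neq0$, not both.

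\textbf{The flagged obstacle is routine.} The group $H_1(W_1;\L)\rtimes\Z$ is poly-torsion-free-abelian, hence amenable and in $D(\Q)$. The composite $\pi_1(W_1)\to\pi_1(W_1)/\pi_1(W_1)^{(2)}\to H_1(W_1;\L)\rtimes\Z$ sends $\eta_j$ to $((f_1)_*\eta_j,0)\neq 1$.

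\textbf{Comparison of the final estimate.} After these repairs, your estimate is a valid variant. You bound the pattern term $\rhot(M(R^{\#N}),\phi')$ by a Cheeger--Gromov constant: fix $N$ first, then infect every copy with one knot $C$ whose $\rho_0(C)$ exceeds $2r$ plus that constant. That is the paper's device for Theorem~\ref{theorem:main-vanishing-CG}. For this theorem the paper instead projects onto the $(2t-1)$-primary part $A'_{2t-1}$ of $H_1(W_1;\L)$. Then each $\phi_i'$ kills the right-band curve and extends over the exterior of the ribbon disk obtained by cutting the right band, so $\rhot(M(R),\phi_i')=0$. The payoff is an explicit condition on the companion, $\rho_0>4g+2n$ (e.g.\ more than $3g+\tfrac32 n$ left-handed trefoils). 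Your route gives only an inexplicit size for $C$.
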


\begin{remark}\label{remark:main-ribbon}
\begin{enumerate}
\item In Theorem~\ref{theorem:main-ribbon}, since $K$ is ribbon, in each decomposition $X=X_1\cup_{S^3}X_2$ for 4-manifolds~$X_i$ with boundary~$S^3$, the knot~$K$ bounds a null-homologous disk in the collar of $S^3$ in each~$X_i$. Therefore~$K$ is $H$--slice in each~$X_i$. 

\item When $X=S^4$, since $\gds^{S^4}(K)\le \gds(K)$, Theorem~\ref{theorem:main-ribbon} reproves \cite[Theorem~1.2]{Chen:2021-1}, which states that for any integer~$g$, there exists an algebraically doubly slice and ribbon knot $K$ such that $\gds(K)> g$. 

\item Orson and Powell \cite[Theorem~1.1]{Orson-Powell:2021-1} showed that $\gds(K) \ge |\sigma_K(\omega)|$ for every $\omega\in S^1$, where $\sigma_K$ denotes the Levine--Tristram signature function of $K$. The knots~$K$ in Theorem~\ref{theorem:main-ribbon} have $\sigma_K(\omega)=0$ for every $\omega\in S^1$ since they are algebraically doubly slice. 
\end{enumerate}
\end{remark}

Let $\Sigma_n(K)$ be the $n$-fold branched cyclic cover of $S^3$ over $K$. 
If $K$ is a doubly slice knot, then for each prime power $n$, the 3-manifold $\Sigma_n(K)$ is a rational homology 3--sphere, and it embeds in $S^4$ since the $n$-fold branched cyclic cover of $S^4$ over an unknotted 2--sphere is homeomorphic to $S^4$ (refer to \cite{Gilmer-Livingston:1983-1}). 
A natural question arises: if $\Sigma_n(K)$ embeds in $S^4$ for every prime power $n$, is the knot $K$ doubly slice?

We answer this question in the negative. 
We note that if $H_1(\Sigma_n(K))=0$ (equivalently, if $\Sigma_n(K)$ is an integral homology 3--sphere) for all prime powers $n$, then Casson--Gordon invariants vanish for $K$ \cite{Casson-Gordon:1986-1}. 
We also note that every integral homology 3--sphere embeds in $S^4$ \cite[Theorem~1.4]{Freedman:1982-1}. 

\begin{theoremalpha}\label{theorem:main-vanishing-CG}
Let $g$ be a nonnegative integer. Then, there exists an algebraically doubly slice knot $K$ satisfying the following:
\begin{enumerate}
	\item There is no $H_1$--embedding of $M(K)$ in a closed 4-manifold $W$ with $\pi_1(W)\cong \Z$ and $b_2(W)\le 2g$. In particular, $\gds(K)>g$ and $K$ is not doubly slice. 
	\item For every prime power $n$, we have $H_1(\Sigma_n(K))=0$. In particular, $\Sigma_n(K)$  embeds in~$S^4$, and Casson--Gordon invariants vanish for $K$.
\end{enumerate}	
\end{theoremalpha}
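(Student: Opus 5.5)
The plan is to build $K$ as a satellite (infection) of a doubly slice seed knot whose Alexander polynomial is trivial modulo prime-power branched covers. Then to use Theorems~\ref{theorem:H1-embedding} and~\ref{theorem:genus-to-embedding} with a metabelian $\rhot$-invariant computation to block $H_1$--embeddings.

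\textbf{Choice of seed.} Start with a doubly slice knot $R$, for instance a ribbon knot of the form $J\#-J$ or a suitable Stevedore-type knot. Choose it so that its Alexander polynomial $\Delta_R(t)=f(t)f(t^{-1})$ has $f$ irreducible, and so that $H_1(\Sigma_n(R))=0$ for all prime powers $n$. The latter amounts to $\prod_{\zeta^n=1,\zeta\ne1}\Delta_R(\zeta)=\pm1$, i.e.\ $\Delta_R$ has no cyclotomic resultant issues. A concrete choice is $f$ a product-free polynomial like $f(t)=2t-3$ normalized so that $f(t)f(t^{-1})$ evaluated at prime power roots of unity gives units. This needs care, and I would pick $f$ with leading coefficient such that the resultant with each $\Phi_{p^k}$ is $\pm1$ (e.g.\ $f=t^2-t+1$-type choices are excluded; use $f$ with $\Phi_{p^k}$-resultants $\pm1$, which holds when $f(1)=\pm1$ and $f$ is a unit at all prime-power cyclotomic fields, like $f(t)=t-2$? no, resultant $2^{p^k}-1$). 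So instead I will use the standard trick: choose $R$ with $\Delta_R(t)=f(t)f(t^{-1})$ where $f$ is a product of $\Phi_m$'s for $m$ divisible by two distinct primes. Such $\Phi_m$ have resultant $\pm1$ with every $\Phi_{p^k}$, giving $H_1(\Sigma_n)=0$. The decomposition $H_1(M(R);\Lambda)=\Lambda/f\oplus\Lambda/\bar f$ gives two metabolizers.

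\textbf{Infection.} Take curves $\eta_1,\eta_2$ in the complement of $R$, unknotted in $S^3$ and representing generators of the two summands of the rational Alexander module. Infect along them by knots $J$ with huge $\rhot$ (integral of Levine--Tristram signatures), e.g.\ connected sums of many copies of a knot with $\int_{S^1}\sigma_J>0$, to get $K$. Infection preserves the Seifert form up to S-equivalence class, hence $\Delta_K=\Delta_R$ and the branched cover homology. So (2) holds, and $K$ remains algebraically doubly slice.

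\textbf{Obstruction.} Suppose $M(K)\hookrightarrow W$ is an $H_1$--embedding with $b_2(W)\le 2g$. By Theorem~\ref{theorem:H1-embedding}(3), the kernel of $H_1(M;\Lambda)\to H_1(W_1;\Lambda)\oplus H_1(W_2;\Lambda)$ is the image of $\Lambda^r$. Because $\Lambda/f$ and $\Lambda/\bar f$ are simple, one of $\eta_1,\eta_2$ maps nontrivially to some $H_1(W_i;\Lambda)$. This is the main obstacle. A rank count on the $\Lambda$-modules, using that $H_1(W_i;\Lambda)$ is torsion and the exact sequence, must force some summand to survive. If the image of $\Lambda^r$ is the whole module, a different argument is needed, likely a duality or Blanchfield-form argument showing that the kernels for $W_1$ and $W_2$ are complementary. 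Then form $G=\pi_1(W_i)/\pi_1(W_i)^{(2)}_{\Q}$-type rational derived quotient, which is amenable and in $D(\Q)$. Theorem~\ref{theorem:H1-embedding}(4) gives $|\rhot(M(K),\phi)|\le 2r\le4g$. On the other hand, additivity of $\rhot$ under infection gives $\rhot(M(K),\phi)=\rhot(M(R),\phi)+\sum_j\epsilon_j\rhot(M(J),\cdot)$, where $\epsilon_j=1$ exactly for the $\eta_j$ mapping nontrivially, by the standard abelian-extension argument. Since $|\rhot(M(R),\phi)|$ is bounded by the Cheeger--Gromov universal bound and $\rhot(M(J))$ is chosen larger than that plus $4g$, we reach a contradiction. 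Finally, $\gds(K)>g$ follows from Theorem~\ref{theorem:genus-to-embedding}(2) with $X=S^4$ together with $\gds^{S^4}\le\gds$.
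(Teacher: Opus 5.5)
\textbf{There is a genuine gap, exactly at the step you flag as the main obstacle, and it cannot be closed for your knot.} Your $K$ is a single infected copy of a fixed seed with two infection curves, so $H_1(M(K);\L)$ is generated by two elements. For $r=2g\ge 2$, nothing prevents $\partial_*\colon \L^{r}\to H_1(M(K);\L)$ in Theorem~\ref{theorem:H1-embedding}(3) from being onto. In that case $H_1(W_1;\L)=H_1(W_2;\L)=0$, your metabelian $\phi$ factors through $\Z$, and no infection term ever appears.

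The ``complementary kernels'' duality you hope for holds only when $r=0$, where $H_1(M(K);\L)\cong H_1(W_1;\L)\oplus H_1(W_2;\L)$. So your argument proves only the case $g=0$.

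No other argument can rescue this construction either. Since $\eta_1,\eta_2$ have linking number zero with $R$, you can tube a Seifert surface of $R$ along arcs of the $\eta_j$ to get a Seifert surface disjoint from $\eta_1\cup\eta_2$. Its genus $c$ depends only on $(R,\eta_1,\eta_2)$, and the surface survives infection. Hence $\gds(K)\le 2g_3(K)\le 2c$ for every choice of $J$, and statement (1) is false for your $K$ once $g\ge 2c$. Indeed, Theorem~\ref{theorem:genus-to-embedding} then produces an $H_1$--embedding with $b_2(W)\le 2g$.

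\textbf{The missing idea} is that the number of independent infection sites must exceed $r=2g$. The paper takes $K=\#_{i=1}^{2g+1}\bigl(R(\eta_i,J)\#(-R)\bigr)$ with $H_1(M(R);\L)\cong\L/\langle\Phi_{30}(t)\rangle$. Then $\eta_1,\dots,\eta_{2g+1}$ generate a submodule isomorphic to $(\L/\langle\Phi_{30}(t)\rangle)^{2g+1}$ inside $H_1(M(K);\L)\cong(\L/\langle\Phi_{30}(t)\rangle)^{4g+2}$. This submodule cannot be generated by $2g$ elements, so some $\eta_i\notin\Im\partial_*$, and that $\eta_i$ survives in $H_1(W_1;\L)$ or $H_1(W_2;\L)$.

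The $\rhot$-invariant is then split over the connected summands. This uses the standard cobordism, whose $L^2$ and ordinary signatures vanish, together with the infection formula. There are now $2g+1$ seed terms, each bounded by the Cheeger--Gromov constant $D$ of $M(R\#(-R))$. So one must choose $\rho_0(J)>4g+(2g+1)D$, not merely larger than one seed bound plus $4g$.

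\textbf{Your cyclotomic condition is also wrong, though this is easily repaired.} Suppose $m=p^aq^b$ has exactly two prime factors. Then $|\mathrm{Res}(\Phi_m,\Phi_{p^a})|=q^{\varphi(p^a)}\neq 1$, so $H_1(\Sigma_{p^a})\neq 0$. For example, $\Phi_6(-1)=3$; note that $\Phi_6=t^2-t+1$ is precisely the case you had just excluded. The working condition, used via Livingston's result, is that $m$ have at least three distinct prime factors, as with $\Phi_{30}$ in the paper. Then the resultant with every $\Phi_{p^j}$ is $\pm1$.
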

We remark that the lower bounds on $\gds(K)$ in \cite[Theorem~1.1]{Orson-Powell:2021-1} and \cite{Chen:2021-1} do not apply to the knots $K$ in Theorem~\ref{theorem:main-vanishing-CG} since $K$ are algebraically doubly slice and $H_1(\Sigma_2(K))=0$. 

\subsubsection*{Double stabilizing number}

We define \emph{the double stabilizing number} of a knot and relate it to Theorem~\ref{theorem:H1-embedding}. 
A knot $K$ is \emph{stably $H$--slice} if it is $H$--slice in $D^4\# n S^2\times S^2$, the connected sum of $D^4$ and $n$ copies of $S^2\times S^2$, for some $n\ge 0$. 
This occurs if and only if the Arf invariant of $K$ vanishes \cite{Schneiderman:2010-1}. 
Conway and Nagel \cite{Conway-Nagel:2020-1} defined \emph{the stabilizing number} of a stably $H$--slice knot $K$ as
\[
\sn(K)=\text{min} \{\,n\mid K \text{ is }H\text{--slice in }D^4\# nS^2\times S^2\}.
\]
We note that $\sn(K)=0$ if and only if $K$ is slice in $D^4$ since every slice disk in $D^4$ is null-homologous.

We extend the above notions to doubly slice knots. 
For an integer $n\ge 0$, let 
\[
W_n =D^4\# nS^2\times S^2.
\]  
We say that a knot $K$ is {\it stably doubly slice} if it is $(W_m,W_n)$--doubly slice for some integers~$m,n\ge 0$. It is shown in Theorem~\ref{theorem:stably-doubly-slice-Arf-invariant}(1) that a knot is stably doubly slice if and only if it is stably $H$--slice.

For a stably doubly slice knot $K$, we define {\it the double stabilizing number of $K$} as
\[
\dsn(K)=\text{min}\{\,m+n\mid K \text{ is } (W_m,W_n)\text{--doubly slice}\}. 
\]

In Theorem~\ref{theorem:stably-doubly-slice-Arf-invariant}~(2) we show that for a stably doubly slice knot $K$
\begin{equation*}\label{equation:sn-dsn}
2\sn(K)\le \dsn(K)\le \gds(K).
\end{equation*}
We ask whether these inequalities can be strict.
Using Theorem~\ref{theorem:H1-embedding}, we present the following theorem, which shows that the difference $\dsn(K) - 2\sn(K)$ can be arbitrarily large.

\begin{theoremalpha}\label{theorem:main-dsn}
For each integer $m\ge 0$, there exists an algebraically doubly slice and ribbon knot $K$ (hence $\sn(K)=0$) such that $\dsn(K)> m$. 
\end{theoremalpha}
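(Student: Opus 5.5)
The plan is to reduce Theorem~\ref{theorem:main-dsn} to Theorem~\ref{theorem:main-ribbon}, via the fact that being $(W_m,W_n)$--doubly slice is a special case of having small $X$--double slice genus for $X=S^4\# (m+n)S^2\times S^2$. Fix $m\ge 0$. We apply Theorem~\ref{theorem:main-ribbon} with $g=0$ and $n_0=2m$. This gives an algebraically doubly slice ribbon knot $K$ such that $M(K)$ has no $H_1$--embedding into any closed 4-manifold $W$ with $\pi_1(W)\cong\Z$ and $b_2(W)\le 2m$. Since $K$ is ribbon, it is slice in $D^4$, and so $\sn(K)=0$.

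Suppose, for contradiction, that $\dsn(K)\le m$. Then $K$ is $(W_a,W_b)$--doubly slice for some $a,b\ge 0$ with $a+b\le m$. Put $X_1=W_a$ and $X_2=W_b$. Both are simply connected 4-manifolds with boundary $S^3$, and
\[
X=X_1\cup_{S^3}X_2\cong S^4\#(a+b)S^2\times S^2,
\]
so $b_2(X)=2(a+b)\le 2m$. By definition $g_{X_1,X_2}(K)=0$. Theorem~\ref{theorem:genus-to-embedding}(1) with $g=0$ then produces a closed 4-manifold $W$ with $\pi_1(W)\cong\Z$ and $H_2(W)\cong\Z^{b_2(X)}$, together with an $H_1$--embedding of $M(K)$ in $W$. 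Since $b_2(W)=2(a+b)\le 2m$, this contradicts the choice of $K$. Hence $\dsn(K)>m$.

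The only point requiring care is the quantifier in Theorem~\ref{theorem:main-ribbon}. Its first assertion excludes $H_1$--embeddings into every $W$ with $b_2(W)\le 2g+n$, not only those with $b_2(W)$ exactly $2g+n$. This is what lets a single knot $K$ handle all splittings $a+b\le m$ at once. All the substantive work, namely constructing $K$ through $\rhot$-invariant bounds, is already contained in Theorems~\ref{theorem:H1-embedding} and~\ref{theorem:main-ribbon}.

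Alternatively, one could take $n_0=2m$ and use the "in particular" clause of Theorem~\ref{theorem:main-ribbon} directly. That would require checking $\gdsx(K)\le g_{W_a,W_b}(K)$ and possibly padding $X$ by additional $S^2\times S^2$ summands to reach $b_2(X)=2m$, so I prefer the direct route through Theorem~\ref{theorem:genus-to-embedding} above.
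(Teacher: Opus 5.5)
Your proposal is correct and follows essentially the paper's route. The paper applies Theorem~\ref{theorem:main-ribbon} with $g=0$ and $n=2m$, then invokes Theorem~\ref{theorem:H_1-embedding-and-dsn}, which is just Theorem~\ref{theorem:genus-to-embedding}(1) with $g=0$ applied to the splitting $W_a\cup_{S^3}W_b$, exactly as you do inline. Your use of the ``$b_2(W)\le 2g+n$'' clause to handle $a+b<m$ is slightly cleaner than the paper's ``we may assume $\dsn(K)=m$''.
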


For a simply connected 4-manifold $X$ with boundary $S^3$ and $x\in H_2(X,\partial X)$, Conway, Orson, and Pencovitch \cite[Definition~1.11]{Conway-Orson-Pencovitch:2025-1} extended the stabilizing number of $K$ to the \emph{$(x,X)$--stabilizing number of $K$}. 
Inspired by this, in Subsection~\ref{subsection:X-double-stabilizing-number}, for a simply connected closed 4-manifold~$X,$ we will define the \emph{$X$--double stabilizing number of $K$}, extending the notion of double stabilizing number, and show that it can be arbitrarily large.

\begin{remark}\label{remark:double-stabilizing-number}
Define the \emph{weak double stabilizing number} of $K$ as 
\[
\wdsn(K)=\text{min}\{\,n\mid K \text{ is } mS^2 \times S^2\text{--doubly slice} \}. 
\]
Clearly, $\wdsn(K)\le \dsn(K)$. If $mS^2 \times S^2 = X_1\cup_{S^3} X_2$ for 4-manifolds $X_i$ with $\partial X_i=S^3$, then $H_2(mS^2 \times S^2)\cong H_2(X_1)\oplus H_2(X_2)$. Using this, along with the proof of Theorem~\ref{theorem:main-dsn}, one can show that Theorem~\ref{theorem:main-dsn} still holds when $\dsn(K)>m$ is replaced by $\wdsn(K)>m$. 
But the authors do not know whether $2\sn(K)\le \wdsn(K)$: when $mS^2 \times S^2 = X_1\cup_{S^3} X_2$, possibly $X_1\ncong D^4\# nS^2\times S^2$ for any $n$. 
For instance, $8 S^2\times S^2\cong X\cup (-X)$ where $X=E_8\sm \text{int} (D^4)$.
\end{remark}

\subsubsection*{Superslice genus} We extend the notions of a \emph{superslice knot} and \emph{the supersilce genus} of a knot to simply connected 4-manifolds. 
A knot $K$ in $S^3$ is \emph{superslice} if there exists a properly embedded disk~$D$ in $D^4$ with $\partial D=K$ such that the double of $D$ is unknotted in~$S^4$. 
For brevity, for a manifold~$M$ with boundary, let $DM$ denote the double of $M$.
The \emph{superslice genus of~$K$}, denoted by $g_s(K)$, is the minimal genus of a properly embedded surface~$F$ in~$D^4$ with $\partial F=K$ such that $DF$ is unknotted in~$S^4$. 
It is known that~$g_s(K)$ equals the $\Z$--slice genus of~$K$, where the \emph{$\Z$--slice genus of $K$} is defined as the minimal genus of a properly embedded surface~$F$ in $D^4$ with $\partial F=K$ such that $\pi_1(D^4\sm F)\cong \Z$. (see \cite[Corollary~1.7]{Feller-Lewark:2024-1} and \cite[Section~1]{Chen:2021-1}). 
For more about the $\Z$--slice genus and $\Z$--surfaces, we refer the reader to \cite{Feller-Lewark:2024-1, Conway-Powell:2023-1}.

Let $X$ be a simply connected 4-manifold with boundary $S^3$. We say that a knot $K$ in $S^3$ is \emph{$X$--superslice} if~$K$ bounds a properly embedded disk $D$ in $X$ with $\partial D=K$ such that the double of $D$ is a $\Z$--sphere in $DX$. 
We also define \emph{the $X$--superslice genus of $K$} as 
\[
g_s^X(K)= \text{min}\{\text{genus}(F)\mid DF \text{ is a }\Z \text{--surface in } DX\},
\]
where $F$ runs over all properly embedded surfaces in $X$ with $\partial F=K$. 
(Then, $F$ is null-homologous in $X$. See Remark~\ref{remark:unknotting-conjecture}(2).)

It is readily seen that $\gds^{DX}(K) \le 2 g_s^X(K)$ and $g_s^{D^4}(K)\le g_s(K)$. We note that $g_s^{D^4}(K)= g_s(K)$ if the unknotting conjecture is true. 
In Theorem~\ref{theorem:inequalities-superslice-genus} we show that $g_s^X(K)\le g_s^{D^4}(K)$.

The following theorem gives lower bounds on $g_s^X(K)$. Recall that $\L=\Q[t^{\pm 1}]$.

\begin{theoremalpha}\label{theorem:lower-bound-superslice-genus}
Let $K$ be a knot in $S^3$ and $X$ be a simply connected 4-manifold with boundary~$S^3$. Then, the minimal number of generators of $H_1(M(K);\L)$ as a $\L$-module is less than or equal to $4g_s^X(K) + 2b_2(X)$.
\end{theoremalpha}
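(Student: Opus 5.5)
Let $g=g_s^X(K)$ and fix a properly embedded surface $F\subset X$ with $\partial F=K$, genus $g$, such that $DF$ is a $\Z$--surface in $DX=X\cup_{S^3}(-X)$. Then $DF$ has genus $2g$, and $DX$ is simply connected and closed with $b_2(DX)=2b_2(X)$. In particular $g_{X,-X}(K)\le 2g$, since $F\cup_K \bar F=DF$ is a $\Z$--surface in $X\cup_{S^3}(-X)$.

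The plan is to feed this into the construction behind Theorem~\ref{theorem:genus-to-embedding}(1). That result gives a closed 4-manifold $W$ with $\pi_1(W)\cong\Z$ and $b_2(W)=2(2g)+b_2(DX)=4g+2b_2(X)$, together with an $H_1$--embedding $f\colon M(K)\to W$. Concretely, I expect $W$ to be the surgery on $DF$, that is, the complement of a tubular neighborhood of $DF$ glued to a handlebody $\natural^{2g}S^1\times D^3$-type piece, and $f(M(K))$ to be the union of $X_K$-type pieces.

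Next apply Theorem~\ref{theorem:H1-embedding}(3) to $f$ with $r=b_2(W)=4g+2b_2(X)$. This gives a surjection from $\L^r\oplus\cdots$; more precisely, exactness yields
\[
H_1(M(K);\L)\big/\Im(\L^r)\;\cong\; H_1(W_1;\L)\oplus H_1(W_2;\L).
\]
So it suffices to show that, in our situation, both $H_1(W_i;\L)$ vanish. Then $H_1(M(K);\L)$ is a quotient of $\L^r$ and is generated by at most $r=4g_s^X(K)+2b_2(X)$ elements, which is the claim.

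The key point is therefore to exploit the symmetry of the doubled surface. Since $DF$ is a double, the decomposition $W=W_1\cup_{M(K)}W_2$ can be taken to be the double of a single manifold $V$, namely $X$ minus a neighborhood of $F$ with the surgery pieces attached, so $W_1=V$ and $W_2=-V$. I will argue that the involution gives a retraction $W\to W_1$ compatible with the maps to $\Z$. Hence $H_1(W_1;\L)$ is a direct summand of $H_1(W;\L)$. Because $\pi_1(W)\cong\Z$, the infinite cyclic cover of $W$ is simply connected, so $H_1(W;\L)=0$. Therefore $H_1(W_i;\L)=0$ for $i=1,2$.

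The main obstacle is making the retraction rigorous. The fold map $DV\to V$ is a retraction inducing the identity on $H_1\cong\Z$, so it lifts to infinite cyclic covers and $H_1(V;\L)\to H_1(DV;\L)\to H_1(V;\L)$ is the identity. The real work is checking that the $W$ produced in Theorem~\ref{theorem:genus-to-embedding} genuinely is such a double. If it is not, I would redo the construction directly for $DF\subset DX$, doing surgery symmetrically along curves in $F$ and their mirrors.
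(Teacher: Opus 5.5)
Your proposal is correct and follows the paper's proof. You double $F$ and run the construction from the proof of Theorem~\ref{theorem:genus-to-embedding} on $DF\subset DX$; with the framing and handlebody chosen symmetrically, it does give $W=W_1\cup_{M(K)}W_2$ with $W_2$ a mirror copy of $W_1$. You then read off the bound from Theorem~\ref{theorem:H1-embedding}(3) with $r=4g+2b_2(X)$, once $H_1(W_i;\L)=0$.

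The only difference is the vanishing step. The paper notes that $(f_1)_*=(f_2)_*$, so the surjection $((f_1)_*,(f_2)_*)$ lands in the diagonal of $H_1(W_1;\L)\oplus H_1(W_2;\L)$, which forces both summands to be zero. Your fold-map retraction instead exhibits $H_1(W_1;\L)$ as a retract of $H_1(W;\L)=0$. The two are equally short and use the same inputs: the symmetry and $\pi_1(W)\cong\Z$.
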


Using Theorem~\ref{theorem:lower-bound-superslice-genus}, we obtain the following theorem, which shows that the difference $2g_s^X(K) - g_{ds}^{DX}(K)$ can be arbitrarily large.

\begin{theoremalpha}\label{theorem:superslice-genus-large}
Let $X$ be a simply connected 4-manifold with boundary $S^3$, and let $g$ be an integer. Then, there exists a doubly slice and ribbon knot $K$ such that $g_s^X(K)>g$ and $\gds^{DX}(K)=0$.
\end{theoremalpha}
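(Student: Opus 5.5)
Fix $X$ and $g$. Take $N=4g+2b_2(X)+1$ and choose a knot $K$ that is doubly slice and ribbon and for which $H_1(M(K);\L)$ needs more than $N-1$ generators as a $\L$-module. Then Theorem~\ref{theorem:lower-bound-superslice-genus} gives $4g_s^X(K)+2b_2(X)\ge N$, hence $g_s^X(K)>g$. It then suffices to prove $\gds^{DX}(K)=0$ and to build $K$.

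\emph{Construction.} Start from a ribbon, doubly slice knot $J$ with nontrivial Alexander module. The standard choice is $J=J_0\#(-J_0)$ with $J_0$ a knot of Alexander polynomial $\Delta\neq 1$, say $J_0$ the trefoil. For any knot $J_0$, the knot $J_0\#-J_0$ is ribbon and doubly slice: it is the equator of the spun knot of $J_0$, which is an unknotted sphere in $S^4$. Set $K=\#^{N} J$. Connected sums of ribbon knots are ribbon, and connected sums of doubly slice knots are doubly slice, so $K$ is ribbon and doubly slice. Since $\Q[t^{\pm1}]$ is a PID, $H_1(M(K);\L)$ is isomorphic to the Alexander module
\[
\bigoplus^{2N}\L/(\Delta).
\]
Localizing at a prime factor of $\Delta$ and tensoring with the residue field gives a vector space of dimension $2N$. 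So the minimal number of generators is $2N>N-1$.

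\emph{$DX$--double slice genus.} I claim $\gds^{DX}(K)\le\gds^{S^4}(K)=0$. This is Theorem~\ref{theorem:inequalities-double-slice-genus}, stated to give $\gds^{X'}(K)\le\gds^{S^4}(K)$ for every simply connected closed $X'$, applied with $X'=DX$. If I prove it directly, I would write $DX=X\cup_{S^3}(-X)$ and let $S=D_1\cup D_2\subset S^4=D^4\cup D^4$ be a $\Z$--sphere with $S\cap S^3=K$; such an $S$ exists because $K$ is doubly slice (Remark~\ref{remark:unknotting-conjecture}(1)).

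Choose a point on the splitting $S^3$ away from $K$ and a small $4$-ball around it, meeting each half in a half-ball. Replace these two half-balls by $X^\circ$ and $-X^\circ$, where $X^\circ$ is $X$ with an open $3$-ball removed from its boundary. This realizes $DX=S^4\# DX$ in a way compatible with the splitting: the two new pieces are $D^4\natural X\cong X$ and $D^4\natural(-X)\cong -X$, each with boundary $S^3$, and $K$ is still the intersection of $S$ with the splitting sphere.

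The complement of $S$ is $(S^4\setminus S)\# DX$. Since $DX$ is simply connected, van Kampen gives
\[
\pi_1\bigl(DX\setminus S\bigr)\cong\pi_1(S^4\setminus S)\cong\Z .
\]

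\emph{Main obstacle.} The key step is the module computation, in particular making sure the identification of $H_1(M(K);\L)$ with the rational Alexander module (direct sum over the summands) is valid for $M(K)$ with its natural $\L$ structure. The standard route is to compute $H_1$ of the infinite cyclic cover of $M(K)$, which agrees with that of the knot exterior. Once that holds, the generator count follows from elementary divisor theory.
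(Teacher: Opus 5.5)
Your argument follows the paper's proof. You take a connected sum of $N=4g+2b_2(X)+1$ copies of a doubly slice ribbon knot with nonzero rational Alexander module and apply Theorem~\ref{theorem:lower-bound-superslice-genus} to get $g_s^X(K)>g$. You then get $\gds^{DX}(K)=0$ from a van Kampen argument. The paper does this directly, with an unknotted sphere in a bicollar of $S^3\subset DX$; citing Theorem~\ref{theorem:inequalities-double-slice-genus} with $X'=DX$, or using your connected-sum picture, amounts to the same argument.

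Using $J_0\#(-J_0)$ instead of the paper's $9_{46}$ is harmless. Its module is not cyclic, so you get $2N$ generators rather than $N$, which is more than enough. The module computation you single out as the main obstacle is routine: the rational Alexander module is additive under connected sum, and it agrees with $H_1(M(K);\L)$.

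The one step that is false as written is your reason why $J_0\#(-J_0)$ is doubly slice. The (Artin) spun knot of $J_0$ is not unknotted. Its complement has fundamental group $\pi_1(S^3\sm J_0)$, which for the trefoil is nonabelian, not $\Z$. So knowing that $J_0\#(-J_0)$ is a cross-section of this knotted sphere says nothing about double sliceness.

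The correct witness is Zeeman's $1$--twist spun knot $\tau_1 J_0$. It is unknotted, and its equatorial cross-section is again $J_0\#(-J_0)$, because the twisting rotation is isotopic to the identity rel the endpoints of the arc. This is Sumners' theorem that $J_0\#(-J_0)$ is doubly slice. With that replacement, or simply by taking $J=9_{46}$ as the paper does, your proof is complete.
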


\bigskip
This paper is organized as follows. 
In Section~\ref{section:l2-signatures}, we recall $L^2$--signatures and $\rhot$-invariants. 
In Section~\ref{section:embedding} we prove Theorem~\ref{theorem:H1-embedding}.
Theorem~\ref{theorem:genus-to-embedding} is proved in Section~\ref{section:genus-embedding}. 
In Sections~\ref{section:double-silce-genus-in-4-manifolds} and \ref{section:vanishing-CG-invariants}, Theorems~\ref{theorem:main-ribbon} and \ref{theorem:main-vanishing-CG} are proved, respectively. 
Theorem~\ref{theorem:main-dsn} is proved in Section~\ref{section:doubly-stabilizing-number}.
Theorems~\ref{theorem:lower-bound-superslice-genus} and \ref{theorem:superslice-genus-large} are proved in Section~\ref{section:superslice-genus}.
 
\subsection*{Acknowledgments} 
We thank Charles Livingston, James Davis, and Jae Choon Cha for helpful conversations.

\section{Cheeger--Gromov--von Neumann $\rhot$-invariants}\label{section:l2-signatures}
In this section, we recall the $\rhot$-invariants of a 3-manifold and related results.

Let $M$ be a closed 3-manifold and $\phi\colon \pi_1(M)\to G$ a homomorphism for a group $G$. 
In \cite{Cheeger-Gromov:1985-1}, for a given Riemannian metric on $M$, Cheeger and Gromov defined the \emph{Cheeger--Gromov--von Neumann $\rhot$-invariant $\rhot(M,\phi)$} using the signature operators of $M$ and the $G$-cover of $M$. They showed that this invariant is independent of the choice of a metric. 
Later, Chang and Weinberger \cite{Chang-Weinberger:2003-1} provided a topological definition of $\rhot(M,\phi)$ as follows. 
For the homomorphism $\phi$, there exist an injective homomorphism $i\colon G\to G'$ and a 4-manifold~$W$ such that $i\circ\phi\colon \pi_1(M)\to G'$ extends to $\pi_1(W)$. 
Then, $\rhot(M,\phi)$ is defined as 
\begin{equation}\label{equation:rho-invariant}
\rhot(M,\phi)=\sign_{G'}^{(2)}(W)-\sign(W) \in \R,
\end{equation}
where $\sign_{G'}^{(2)}(W)$ is the $\lt$--signature of the hermitian intersection form 
\[
H_2(W;\cN G')\times H_2(W;\cN G')\to \cN G'
\] 
for the group von Neumann algebra $\cN G'$, and $\sign(W)$ is the ordinary signature of~$W$. It is known that the above definition of $\rhot(M,\phi)$ is independent of the choice of the both the map $i\colon G\to G'$ and the 4-manifold $W$.
For more details about $\rhot$-invariants and $\lt$--signatures, see \cite{Cochran-Orr-Teichner:2003-1,Cochran-Teichner:2003-1,Cha:2010-01}. 

Below, we provide a list of properties of $\rhot$-invariants that will be used in this paper. Recall that for a knot $K$, the 3-manifold $M(K)$ denotes the zero-framed surgery on $K$ in $S^3$ and $\sigma_K$ denotes the Levine--Tristram signature function of $K$. 
\begin{lemma}[{\cite[Section 2, p.~108]{Cochran-Orr-Teichner:2004-1}}] \label{lemma:properties-rho-invariant}
Let $M$ be a closed 3-manifold and $\phi\colon \pi_1(M)\to G$ a homomorphism.
\begin{enumerate}
	\item $\rhot(-M,\phi) = -\rhot(M,\phi)$.
	\item \textup{(Subgroup property)} If $i\colon G\to G'$ is an injective homomorphism, then 
	\[
	\rhot(M,i\circ \phi) = \rhot(M,\phi).
	\]
	\item If $\phi$ is trivial, then $\rhot(M,\phi)=0$.
	\item If $\phi\colon \pi_1(M(K))\to \Z$ is a surjective homomorphism, then 
	\[
	\rhot(M(K),\phi)= \int_{S^1}\sigma_K(\omega)\,d\omega
	\] 
	where $S^1$ is the unit circle in $\C$ and the integral is normalized so that $\int_{S^1}d\omega = 1$. 
\end{enumerate}
\end{lemma}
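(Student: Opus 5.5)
This lemma is quoted from \cite{Cochran-Orr-Teichner:2004-1}, and every part follows from the Chang--Weinberger definition \eqref{equation:rho-invariant} together with the fact that $\rhot(M,\phi)$ does not depend on the auxiliary choices of $G'$ and $W$. In each part I would pick a convenient $W$ and compute.

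For (1), take $W$ with $\partial W=M$ over which $i\circ\phi$ extends. Then $-W$ has boundary $-M$, and reversing orientation negates both the ordinary intersection form and the $\cN G'$-valued one. Hence both signatures change sign, and so does their difference.

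For (2), if $j\colon G'\to G''$ is an injection and $W$ works for $\phi$ via $G'$, then $W$ also works for $i\circ\phi$ via $j$. I would then invoke the induction property of $\lt$--signatures: for an injective $G'\to G''$, the $\lt$--signature over $\cN G''$ of the induced form equals the one over $\cN G'$. This is essentially why $\rhot$ is well defined.

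For (3), let $\phi$ be trivial and take $G'=G$ and any $W$ with $\partial W=M$, for instance one coming from surgery on a link, with the trivial map on $\pi_1(W)$. For the trivial coefficient system, $H_2(W;\cN G)=H_2(W;\C)\otimes\cN G$ with the form extended scalarly. The $\lt$--signature then equals the ordinary signature (von Neumann trace of projections onto positive and negative parts), so the defect vanishes.

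For (4), this is the main computation. By (2) we may take $G=\Z$, since $\phi$ is surjective onto $\Z$. I would choose $W$ to be a 4-manifold bounded by $M(K)$ over which $\phi$ extends; for example, push a Seifert surface into $D^4$ and build a 4-manifold from the exterior, or use a bounding $W$ with $H_1(W)\cong\Z$ generated by the meridian. Identify $\cN\Z$ with $L^\infty(S^1)$ via Fourier transform. Then the $\lt$--signature is the integral over $\omega\in S^1$ of the signature of the twisted form over $\C$ with $t\mapsto\omega$. For the standard $W$, the twisted signature minus the untwisted signature at $\omega$ equals the Levine--Tristram signature $\sigma_K(\omega)$ (Viro's / the Cochran--Orr--Teichner computation), for all but finitely many $\omega$. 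Integrating gives the formula.

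I expect the main obstacle to be the identification of the $\omega$-twisted signature of the chosen $W$ with $\sigma_K(\omega)$, i.e.\ computing the twisted intersection form in terms of the Seifert form $(1-\omega)V+(1-\bar\omega)V^T$. Beyond that, one only needs the measure-zero exceptional set to be harmless.
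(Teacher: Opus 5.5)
Your proposal is correct and is essentially the standard argument: the paper gives no proof of its own and simply quotes Cochran--Orr--Teichner. There, (1)--(3) are read off from the definition \eqref{equation:rho-invariant} and its independence of the choices. For (2) it is cleanest to pick $j\colon G'\to G''$ and $W$ for $i\circ\phi$ first, so that the same pair computes $\rhot(M,\phi)$ via the injection $j\circ i$; this avoids assuming that a suitable $W$ exists with target $G'$ itself. Part (4) is proved as you sketch, via $\cN\Z\cong L^\infty(S^1)$ and a 4-manifold built from a pushed-in Seifert surface whose twisted intersection form is $(1-\omega)V+(1-\bar\omega)V^T$.
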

 
 To estimate $\rhot(M,\phi)$, we use the following lemma.
 \begin{lemma}[{\cite{Cheeger-Gromov:1985-1}}]\label{lemma:universal-bound}
 Let $M$ be a closed 3-manifold. Then, there exists a constant $C_M$ such that $|\rhot(M,\phi)|\le C_M$ for all homomorphisms $\phi$.
 \end{lemma}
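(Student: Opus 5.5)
The plan is to follow the topological approach of Chang--Weinberger, made quantitative in the spirit of Cha's later work, rather than reproduce Cheeger--Gromov's analytic argument. The key observation is that for any 4-manifold $W$ over which $i\circ\phi$ extends, the definition~\eqref{equation:rho-invariant} gives
\[
|\rhot(M,\phi)|\le |\sign_{G'}^{(2)}(W)|+|\sign(W)|\le \dim_{\cN G'}H_2(W;\cN G')+b_2(W).
\]
Both terms are bounded by the number of $2$-cells (or $2$-handles) of $W$, since the von Neumann dimension of a chain module $\cN G'^{\,k}$ is $k$. So it suffices to build, for every $\phi$, a bounding $W$ whose cell count is bounded by a constant depending only on $M$.

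\textbf{Step 1: reduce to a universal target.} Replace $G$ by a group $G'$ containing $G$ with vanishing rational (indeed integral) homology in low degrees. For example, use a functorial embedding into an acyclic group, via Baumslag--Dyer--Heller or Kan--Thurston type constructions, or a mitosis/cone construction. By the subgroup property, Lemma~\ref{lemma:properties-rho-invariant}(2), this does not change $\rhot$. Acyclicity makes the class $(i\circ\phi)_*[M]\in H_3(BG')$, and hence the relevant bordism class in $\Omega_3(BG')$, vanish. Therefore a bounding $W$ over $BG'$ exists.

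\textbf{Step 2: control the size of the null-bordism.} Fix a triangulation of $M$ with $N$ simplices, and realize $\phi$ simplicially on a model of $BG'$. The point is to carry out the null-homology inside $BG'$ explicitly and \emph{locally}. Each simplex of $M$ is sent to a simplex of a cone-type complex in which every cycle bounds via a fixed, bounded number of new cells, as in the standard cone construction used to embed groups into acyclic ones. This produces a $4$-chain $u$ with $\partial u=[M]$ whose number of simplices is at most $cN$ for a universal constant $c$. The resulting chain-level bordism is then converted into an honest $4$-manifold $W$ with $\partial W=M$ over $BG'$. This uses surgery below the middle dimension and handle modifications, each increasing the number of $2$-handles by an amount bounded linearly in the number of simplices of $u$.

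\textbf{Step 3: conclude.} With $b_2$ and the number of $2$-handles of $W$ bounded by $C'N$, the displayed inequality gives $|\rhot(M,\phi)|\le C_M:=2C'N$, independent of $\phi$.

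The main obstacle is Step~2: passing from existence of a null-bordism to one of \emph{uniformly bounded complexity}. Abstract vanishing of $\Omega_3(BG')$ gives no size control. I would first try Cha's approach, which gives an explicit simplicial cone on $BG$ with a linear bound on the chain homotopy. I would then check carefully that the manifold-ification step keeps the $2$-handle count linear in $N$. If this fails, the fallback is to cite the original analytic proof of Cheeger--Gromov, which bounds $\rhot$ via the $\eta$-invariant of a fixed metric on $M$ and is manifestly independent of the cover.
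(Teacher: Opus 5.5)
Your route differs from the paper's. The paper gives no argument for this lemma; it cites Cheeger--Gromov, whose proof is analytic: the geometry of one fixed Riemannian metric on $M$ bounds the $\eta$-invariants of the signature operators on all coverings at once. You instead use the Chang--Weinberger definition~\eqref{equation:rho-invariant} together with an acyclic container for $G$, which is Cha's topological approach. Its advantages are that it applies directly to topological manifolds and, with quantitative control, yields explicit bounds like the $69713280\cdot c(K)$ quoted right after the lemma. Your opening inequality and Step~1 are fine. Note that it is integral acyclicity that kills $\Omega_3(BG')\cong H_3(BG')$.

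As written, though, Step~2 is not an argument. Two things are only gestured at there, and they are the real content of Cha's work. The first is the controlled chain null-homotopy. What is needed is a null-homotopy $P$ with $P(\sigma)$ of size bounded independently of $G$ for each simplex $\sigma$, not that ``every cycle bounds via a fixed number of cells''. The second is the conversion of a $4$-chain into a $4$-manifold with controlled $2$-handles, and ``surgery below the middle dimension'' does not describe how that goes.

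More importantly, you do not need Step~2 at all, because you never use the \emph{functoriality} of the Baumslag--Dyer--Heller embedding $i_G\colon G\hookrightarrow\mathcal{A}(G)$. Put $\pi=\pi_1(M)$. Since $\Omega_3(B\mathcal{A}(\pi))\cong H_3(\mathcal{A}(\pi))=0$, fix once and for all a $4$-manifold $W$ with $\partial W=M$ and a homomorphism $\Psi\colon\pi_1(W)\to\mathcal{A}(\pi)$ whose restriction to $\pi_1(M)$ is $i_\pi$. For an arbitrary $\phi\colon\pi\to G$, the homomorphism $\mathcal{A}(\phi)\circ\Psi$ restricts on $\pi_1(M)$ to $\mathcal{A}(\phi)\circ i_\pi=i_G\circ\phi$, and $i_G$ is injective. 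So~\eqref{equation:rho-invariant} and your own inequality give $|\rhot(M,\phi)|\le c+b_2(W)$, where $c$ is the number of $2$-handles in a fixed handle decomposition of $W$. This bound depends on neither $\phi$ nor $G$.

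So the ``main obstacle'' you isolate matters only for explicit, e.g.\ linear, bounds, not for existence. For existence, any dependence on $N$ would suffice anyway. Your ``each modification adds $O(|u|)$ handles'' would in fact only give a quadratic total, not the linear $C'N$ you claim.
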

 
We remark that in Lemma~\ref{lemma:universal-bound}, if $M=M(K)$ for a knot $K$, then an explicit universal upper bound is given by $C_M=69713280\cdot c(K)$ where $c(K)$ is the crossing number of $K$ (see \cite[Theorem~1.9]{Cha:2016-1}).

\section{$H_1$--embedding of 3-manifolds in 4-manifolds}\label{section:embedding}
In this section, we recall groups that are amenable and in Sterebel's class $D(R)$, and provide a proof of Theorem~\ref{theorem:H1-embedding}.

A group $G$ is \emph{amenable} if it admits a finite additive measure that is invariant under the left multiplication. For a commutative ring $R$ with unity, a group $G$ is said to lie in \emph{Strebel's class $D(R)$} if a homomorphism $f\colon P\to Q$ between projective $RG$-modules is injective whenever the homomorphism $1_R\otimes_{RG}f\colon R\otimes_{RG}P\to R\otimes_{RG}Q$ is injective. (Here, we view $R$ as an $(R, RG)$-bimodule with trivial $G$-action.)

A large class of groups are amenable and in Strebel's class $D(R)$. For example, a group~$\Q$ is amenable and in Strebel's class $D(\Z)$, and finite $p$-groups are amenable and in Strebel's class $D(\Z_p)$. A group $G$ is said to be \emph{poly-torsion-free-abelian} if it admits a subnormal series of finite length whose successive quotient groups are torsion-free abelian, and every poly-torsion-free-abelian group is amenable and in Strebel's class $D(R)$ for any ring $R$. For more discussions about amenable groups lying in Strebel's class $D(R)$, we refer the reader to \cite[Section~6]{Cha-Orr:2009-01}.

Now we give a proof of Theorem~\ref{theorem:H1-embedding}.

\begin{proof}[Proof of Theorem~\ref{theorem:H1-embedding}]
Recall that for a space $X$, we define $b_i(X)=\rank_\Z H_i(X)$.  For a homomorphism $\Z[\pi_1(X)]\to \L$, let 
\[
b_i(X;\L) = \rank_\L H_i(X;\L).
\] 
For convenience, we identify $M$ with $f(M)$ via $f$. 

The complement $W\sm M$ has two connected components since both $M$ and $W$ are oriented and $f$ induces an isomorphism $H_1(M)\xrightarrow{\cong} H_1(W)$.
Let $W_1$ and $W_2$ be the closures of the connected components of $W\sm M$. Then, 
\[
\partial W_1=\partial W_2=M \text{ and }W_1\cup_M W_2=W,
\] 
and (1) follows.

Notice that $f_*\colon H_1(M)\to H_1(W)$ factors through $H_1(W_1)$ and $H_1(W_2)$, respectively. Property~(2) readily follows from (1), the Mayer--Vietoris sequence, and that $H_1(M)\cong \Z$ and $f_*\colon H_1(M)\to H_1(W)$ is an isomorphism.

From the Mayer--Vietoris sequence, we obtain the exact sequence
\[
H_2(W;\L)\to H_1(M;\L)\to H_1(W_1;\L)\oplus H_1(W_2;\L)\to H_1(W;\L).
\]
We will show that $H_2(W;\L)\cong \L^r$. One can readily see that $b_i(W)=1$ for $i=0,1,3,4$. Therefore, the Euler characteristic of $W$ equals $b_2(W)=r$, and hence
\begin{equation}\label{equation:euler-characteristic}
r=\sum_{i=0}^4 (-1)^ib_i(W;\L).
\end{equation}
We have $H_1(W;\L)=0$ since $\pi_1(W)\cong \Z$. It follows that $b_3(W;\L)=b_1(W;\L)=0$ from Poincar\'{e} duality and the universal coefficient theorem over the PID~$\L$. One can also see readily that  $b_0(W;\L)=b_4(W;\L)=0$. It follows from \eqref{equation:euler-characteristic} that $b_2(W;\L)=~r$. 
Furthermore, 
\[
H_2(W;\L)\cong H^2(W;\L)\cong \Hom_\L(H_2(W;\L), \L)
\]
since $H_1(W;\L)=0$. It follows that $H_2(W;\L)$ is a free module over $\L$. (Or, one can observe this from that $H_2(W;\Z[t^{\pm1 }])$ is $\Z[t^{\pm 1}]$--free since $\pi_1(W)\cong \Z$. For instance, see \cite[Lemma~3.2]{Conway-Powell:2023-1}.) Therefore, $H_2(W;\L)\cong \L^r$, and Property~(3) follows. 

Suppose that $G$ is an amenable group lying in Strebel's class $D(R)$ where $R=\Q$ or $\Z_p$ with $p$ a prime and that a homomorphism $\phi\colon \pi_1(M)\to G$ extends over $W_i$ for $i=1$ or 2. 

It follows readily from the definition of the $L^2$-signature $\sign_G^{(2)}(W_i)$ that 
\[
\left|\sign_G^{(2)}(W_i)\right| \le \dim^{(2)} H_2(W_i; \cN G),
\] 
where $\dim^{(2)}$ denotes the $L^2$-dimension (see \cite[Section~3.1]{Cha:2010-01}). We have 
\[
\dim^{(2)} H_2(W_i; \cN G)\le \dim_R H_2(W_i;R)
\]
by \cite[Theorem~3.11(1)]{Cha:2010-01}.
Also $\dim_R H_2(W_i;R) \le b_2(W_i)$ since $H_1(W_i)\cong H_1(M)\cong \Z$. Therefore, we obtain 
\[
\left|\sign_G^{(2)}(W_i)\right|\le b_2(W_i).
\] 
Obviously, $|\sign(W_i)|\le b_2(W_i)$, and it follows from \eqref{equation:rho-invariant} that $|\rhot(M,\phi)|\le 2b_2(W_i)$.

Therefore, $|\rhot(M,\phi)|\le 2r$ since $b_2(W_i)\le b_2(W)=r$, and Property~(4) follows.
\end{proof}

\section{The double slice genus and $H_1$--embeddings}\label{section:genus-embedding}
In this section, we give a proof of Theorem~\ref{theorem:genus-to-embedding}. 

\begin{proof}[Proof of Theorem~\ref{theorem:genus-to-embedding}]
There exist surfaces $F_i$ in $X_i$ with $\partial F_i=K$, $i=1,2$, such that $\text{genus}(F_1\cup F_2)\le g$ and $\pi_1((X_1\cup_{S^3} X_2)\sm (F_1\cup_K F_2))\cong \Z$ since $g_{X_1,X_2}(K)\le g$.
By taking the connected sum with an unknotted surface in the collar of $S^3$ in $X_1$ if necessary, we may assume that $\text{genus}(F_1\cup F_2)=g$.  

For $i=1,2$, let $\tilde{X_i}$ be a 4-manifold obtained by attaching a 2-handle to $X_i$ along the 0--framing of $K$ and let $\tilde{F_i}$ be a closed surface in $\tilde{X_i}$ obtained by taking a union of the core of the 2-handle and $F_i$ along $K$. 
Note that $\partial \tilde{X_i} = M(K)$, $H_1(\tilde{X_i})=0$, and $H_2(\tilde{X_i})\cong \Z^{b_2(X_i)+1}$. 
Recall that each $F_i$ is null-homologous in $X_i$ since $F_1\cup F_2$ is a $\Z$--surface and hence null-homologous. Therefore, the self-intersection of $\tilde{F_i}$ vanishes and $\tilde{F_i}$ has trivial normal bundle, $i=1,2$. 
There is a bijection between the set of framings on $\tilde{F_i}$ and $H^1(\tilde{\Sigma_i})$, and there is a framing on $\tilde{F_i}$ such that the induced homomorphism $H_1(\tilde{F_i})\to H_1(\tilde{X_i}\sm \tilde{F_i})$ is trivial (cf. \cite[p.~899]{Cha:2006-1}). 
We identify the tubular neighborhood of $\tilde{F_i}$ in $\tilde{X_i}$ with $\tilde{F_i}\times D^2$ using this framing. 

For $i=1,2$, let $Z_i = \tilde{X_i}\sm \text{int}(\tilde{F_i}\times D^2)$. One can readily show that $H_1(Z_i)\cong \Z$, which is generated by a meridian of $K$, and $b_2(Z_i)= b_2(X_i)+1+2g_i$.  
Let $H_i$ be a handlebody with boundary $\tilde{F_i}$, and define a 4-manifold $W_i$ as
\[
W_i = Z_i\cup \left({H_i\times S^1} \right),
\]
where $\tilde{F_i}\times S^1\subset \partial Z_i$ is identified with $\partial H_i\times S^1 = \partial (H_i\times D^2)$. Using the Mayer--Vietoris sequence and our choice of the framing on $\tilde{F_i}$, one can readily show that $H_1(W_i)\cong \Z$ and $b_2(W_i)=b_2(X_i)+2g_i$.
We also have $\partial W_i=M(K)$. 
Define a 4-manifold $W$ as
\[
W = W_1\cup_{M(K)} W_2.
\]

We will show that $\pi_1(W)\cong \Z$, $H_2(W)\cong \Z^{2g+b_2(X)}$, and the inclusion $M(K)\hookrightarrow W$ is an $H_1$--embedding.

Notice that $g=g_1+g_2$ and $b_2(X)=b_2(X_1)+b_2(X_2)$. Using the Mayer--Vietoris sequence of the pair $(W_1, W_2)$, one can readily see that 
\[
H_1(W)\cong \Z \text{ and } b_2(W)= 2g+b_2(X),
\]
Here, $H_1(W)$ is generated by a meridian of $K$, and therefore the inclusion $M(K) \hookrightarrow W$ is an $H_1$--embedding. The group $H_2(W)$ is free abelian since $H_1(W)\cong \Z$, and it follows that $H_2(W)\cong \Z^{2g+b_2(X)}$.

It remains to show that $\pi_1(W)\cong \Z$. For brevity, let
\[
Z = Z_1\cup_{M(K)} Z_2.
\]
Notice that $\pi_1(Z_i)\cong \pi_1(X_i\sm \Sigma_i)/\langle \ell\rangle$, $i=1,2$, and $\pi_1(M(K))\cong \pi_1(S^3\sm K)/\langle \ell\rangle$, where $\ell$ and $\langle -\rangle$ denote the preferred longitude of $K$ and the subgroup normally generated by $-$, respectively.  Using the Seifert--Van Kampen Theorem one can readily see that 
\[
\pi_1(Z)\cong \pi_1((X_1\cup_{S^3} X_2)\sm (F_1\cup_K F_2))/\langle \ell \rangle \cong \Z/\langle \ell\rangle.
\]
It follows from the Mayer--Vietoris sequence that $H_1(Z)\cong \Z$. We obtain that $\pi_1(Z)\cong \Z$ since $H_1(Z)\cong \Z$ and $\pi_1(Z)$ is a quotient of $\Z$. 
Notice that
\[
W=Z\cup \left(\bigsqcup_{i=1}^2 H_i\times S^1\right).
\]
Noticing that the homomorphism $\pi_1(\partial H_i\times S^1)\to \pi_1(H_i\times S^1)$ induced from inclusion is surjective for each $i$, using the Seifert--Van Kampen Theorem one can readily see that $\pi_1(W)$ is a quotient of $\pi_1(Z)\cong \Z$. 
We recall that $H_1(W)\cong \Z$, and therefore it follows that $\pi_1(W)\cong \Z$.
This proves Theorem~\ref{theorem:genus-to-embedding}~(1).

Theorem~\ref{theorem:genus-to-embedding}~(2) immediately follows from Theorem~\ref{theorem:genus-to-embedding}~(1) and the definition of~$\gdsx(K)$.
\end{proof}

Using Theorem~\ref{theorem:genus-to-embedding}, we give the following theorem, which gives a characterization for a knot to be doubly slice.
\begin{theorem}\label{theorem:doubly-slice-and-H_1-embedding}
	A knot $K$ is doubly slice if and only if there is an $H_1$--embedding of $M(K)$ in~$S^1\times S^3$.
\end{theorem}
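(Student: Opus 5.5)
For the forward direction we apply Theorem~\ref{theorem:genus-to-embedding}, but we keep track of the actual manifold its construction produces rather than only its algebraic invariants. Suppose $K$ is doubly slice. Then $K=S\cap S^3$ for an unknotted $2$--sphere $S\subset S^4$, with $S\cap D^4_i=D_i$ slice disks, $i=1,2$. Running the proof of Theorem~\ref{theorem:genus-to-embedding} with $X_i=D^4$ and $g=0$ gives $W=W_1\cup_{M(K)}W_2$. This $W$ is exactly the result of surgery on $S^4$ along $S$: we remove $S\times D^2$ and glue in $D^3\times S^1$. Here the handlebodies $H_i$ are $3$--balls, and the $2$--handle cores together with the $D_i$ reassemble into $S$. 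Since $S$ is unknotted, $S^4\sm \nu S\cong D^2\times S^2$, so $W\cong S^1\times S^3$. The inclusion $M(K)\hookrightarrow W$ is an $H_1$--embedding because the meridian of $K$ generates $H_1(W)$, as shown in that proof. The only real check is the identification of $W$ with the surgery on $S$. The framing is unique because $H^1(S^2)=0$, so this identification is immediate.

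For the converse, let $M(K)\subset S^1\times S^3$ be an $H_1$--embedding. By Theorem~\ref{theorem:H1-embedding}(1),(2), $M(K)$ separates $S^1\times S^3$ into $W_1$ and $W_2$, each with $H_1(W_i)\cong\Z$ generated by a meridian $\mu$ of $K$. We reverse the construction by surgery along a circle. Choose an embedded circle $\gamma$ representing a generator of $\pi_1(S^1\times S^3)$ that meets $M(K)$ transversely in a single point, namely a copy of $\mu$ near the surgery solid torus crossing $M(K)$ once. Do surgery on $\gamma$, replacing $\gamma\times D^3$ by $D^2\times S^2$ with the framing that yields $S^4$; surgery on $S^1\times pt\subset S^1\times S^3$ gives $S^4$. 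The belt sphere $S=\{0\}\times S^2$ is then a $\Z$--sphere, since its complement is $S^1\times S^3\sm \gamma\times D^3\simeq S^1\times D^3$ up to the standard identification. Moreover $M(K)$ minus a neighborhood of $\gamma\cap M(K)$ is $M(K)\sm(\mu\times D^2)$. If $\mu\times D^2$ is chosen as the surgery solid torus, this is the knot exterior $E(K)$, and it is capped by a $D^1\times D^2$ piece of $D^2\times S^2$ to give $S^3$. The result is a $3$--sphere $\Sigma\subset S^4$ with $\Sigma\cap S=K$ transversely. By the Schoenflies theorem in the topological category, $\Sigma$ is standard, so $K$ is $S^4$--doubly slice. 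By Remark~\ref{remark:unknotting-conjecture}(1) (the genus $0$ case of the unknotting conjecture), $K$ is doubly slice.

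The main obstacle is the converse. Specifically, we must arrange that $\gamma$ meets $M(K)$ in one point along the surgery dual circle $\mu$, and that the framing and gluing produce $S^4$ with $\Sigma$ a genuine $3$--sphere. We will first isotope a generator of $\pi_1$ to intersect $M(K)$ once. This is possible because the generator evaluates to $\pm1$ under the class dual to $M(K)$, which is a generator of $H^1$ by the $H_1$--embedding hypothesis, and a standard tubing argument cancels excess intersection pairs. We then slide the intersection point onto the core of the surgery solid torus. Finally we check, by a local model inside $\gamma\times D^3\cong S^1\times D^3$, that the surgery restricted to $M(K)$ is exactly the reverse zero-surgery. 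Only the framing choice among the two elements of $\pi_1(SO(3))$ needs care, and if the wrong framing gave $S^2\tilde\times S^2$-type results we would simply switch to the other framing.
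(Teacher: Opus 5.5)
Your forward direction is correct in outline and takes a different, more elementary route than the paper. The paper only extracts $\pi_1(W)\cong\Z$ and $H_2(W)=0$ from Theorem~\ref{theorem:genus-to-embedding} and then invokes Kreck's classification; you instead identify $W$ directly with the surgery on $S$. Two points need fixing.
\begin{enumerate}
\item The complement of an unknotted $2$--sphere is $S^4\sm\nu S\cong S^1\times D^3$, not $D^2\times S^2$ (that is the complement of an unknotted circle). With $S^1\times D^3$ the surgery visibly gives $S^1\times(D^3\cup D^3)=S^1\times S^3$.
\item The identification is not just uniqueness of framings. The cores together with the $D_i$ form two disjoint spheres $\tilde{F_1},\tilde{F_2}$, not $S$. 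You must check that the two leftover pieces $D^2\times S^1\times I$ of the $2$--handles glue to $D^2\times S^1\times[-1,1]$. Together with the two copies of $D^3\times S^1$, this reassembles a single $D^3\times S^1$ attached to $S^4\sm\nu S$.
\end{enumerate}

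The converse has a genuine gap: the circle $\gamma$ meeting $M(K)$ transversely in a single point does not exist. As you yourself use, $M(K)$ separates $S^1\times S^3$ into $W_1$ and $W_2$. Hence $[M(K)]=[\partial W_1]=0$ in $H_3(S^1\times S^3)$, its dual class in $H^1$ is zero rather than a generator, and every closed loop meets $M(K)$ with algebraic intersection number $0$. In fact the $H_1$--embedding hypothesis is exactly what forces $M(K)$ to separate. It says that $\mu\subset M(K)$ generates $H_1$, a statement about curves \emph{inside} $M(K)$, not transverse to it. The description also contradicts what you need later. A transverse intersection point would give $\nu\gamma\cap M(K)\cong D^3$, not $\mu\times D^2$. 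So the surgery would never turn $M(K)$ into a closed $\Sigma\cong S^3$ with $\Sigma\cap S=K$, and ``sliding the intersection point onto the core'' has no meaning.

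The repair is to take $\gamma$ to be the core $\mu$ of the surgery solid torus $V\subset M(K)$ itself.
\begin{enumerate}
\item It generates $\pi_1(S^1\times S^3)$, so surgery on it yields $S^4$. Either it is isotopic to $S^1\times\mathrm{pt}$, whose $5$--dimensional $2$--handle cancels the $1$--handle of $S^1\times D^4$, or one notes directly that the result is a homotopy $4$--sphere.
\item Take the $D^3$--framing to be a normal interval to $M(K)$ times a product structure $V=D^2\times S^1$ whose longitudes are meridians of $K$ on $\partial E(K)$.
\item Then $M(K)\sm\mathrm{int}\,V=E(K)$ is capped by $D^2\times S^1\subset D^2\times S^2$ along the meridian slope. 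This gives a bicollared $\Sigma\cong S^3$ meeting the belt sphere $S$ exactly in the core of the filling, i.e.\ in $K$.
\item $S$ is a $\Z$--sphere because its complement is homotopy equivalent to $S^1\times S^3\sm\gamma$. Schoenflies and the genus-zero unknotting theorem (Remark~\ref{remark:unknotting-conjecture}(1)) finish the argument.
\end{enumerate}
This is the paper's proof in other words. Attaching $0$--framed $2$--handles to $V_1$ and $V_2$ along $\mu$ and gluing along $S^3$ is precisely surgery on $\mu$ in a bicollar of $M(K)$, with the two cocores forming the belt sphere. The paper recognizes each half as $D^4$ via Freedman ($\pi_1=0$, $H_2=0$, boundary $S^3$) instead of using the circle surgery plus Schoenflies.
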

\begin{proof}
Suppose that $K$ is doubly slice. Then, $g_{ds}(K)=0$, and by Theorem~\ref{theorem:genus-to-embedding}~(2), we obtain a closed 4-manifold $W$ with $\pi_1(W)\cong \Z$ and $H_2(W)=0$ such that there is an $H_1$--embedding of $M(K)$ in $W$. We will show that $W$ is homeomorphic to $S^1\times S^3$.

We have $H^2(W;\Z_2)\cong H_2(W;\Z_2)=0$ since $H_1(W)\cong \Z$ and $H_2(W)=0$, and therefore the 4-manifold $W$ is spin. We also have $H_1(W;\Z[t^{\pm 1}])=0$ since $\pi_1(W)\cong \Z$, and therefore $H_2(W;\Z[t^{\pm 1}])$ has no $\Z$-torsion. 
Furthermore, as in the proof of Theorem~\ref{theorem:H1-embedding}, one can show that $H_2(W;\L)=0$, and it follows that $H_2(W;\Z[t^{\pm 1}])=0$.

Therefore, $W$ is a spin closed 4-manifold with $\pi_1(W)\cong \Z$ that has trivial intersection form on $\pi_2(W)\cong H_2(W;\Z[t^{\pm 1}])=0$.
Now it follows from \cite[Theorem~F]{Kreck:1999-1} that $W$ is homeomorphic to $S^1\times S^3$ (also see \cite[Section~10.7]{Freedman-Quinn:1990-1}).	

Suppose that there is an $H_1$--embedding $f\colon M(K)\to S^1\times S^3$. For convenience, we identify~$M(K)$ with $f(M(K))$. 
The complement $S^1\times S^3 \sm M(K)$ has two components since the homomorphism $H_1(M(K))\to H_1(S^1\times S^3)$ induced from $f$ is an isomorphism. 
Let $V_1$ and~$V_2$ be the closures of the components of $S^1\times S^3\sm M(K)$. 
Then, $V_i$ are 4-manifolds with $\partial V_i=M(K)$, a meridian of $K$ normally generates $\pi_1(V_i)$, and $H_2(V_i)=0$, $i=1, 2$.

Recall that $M(K)=(S^3\sm \nu K)\cup_{S^1\times S^1}(S^1\times D^2)$ where $\nu K$ denotes the open tubular neighborhood of $K$ in $S^3$. 
Let $\mu$ be the curve $S^1\times 0\subset S^1\times D^2\subset M(K)$, which is a meridian of $K$. For $i=1, 2$, let $W_i$ be the 4-manifold obtained by attaching a 2-handle $D^2\times D^2$ to $V_i$ along $\mu$ with 0--framing. 
Then, $\partial W_i=S^3$, $\pi_1(W_i)=0$, and $H_2(W_i)=0$. It follows that $W_i$ is homeomorphic to $D^4$. 
Let~$D_i$ be the cocore of the 2-handle $0\times D^2$ in $W_i$, $i=1, 2$. 
Then, $\partial D_i$ represents the knot~$K$ in $S^3=\partial W_i$. 

Let $W=W_1\cup_{S^3} W_2$, which is homeomorphic to~$S^4$, and let $S=D_1\cup_K D_2$, which is homeomorphic to $S^2$. 
We show that $S$ is unknotted in $W$, which will imply that $K$ is doubly slice. 
Note that $W\sm S = (W_1\sm D_1)\cup_{S^3\sm K}(W_2\sm D_2)$.
It follows from the Seifert-Van Kampen Theorem that 
\begin{align*}
\pi_1(W\sm S)	&\cong \pi_1(W_1\sm D_1)*_{\pi_1(S^3\sm K)}\pi_1(W_2\sm D_2) \\
			&\cong \pi_1(V_1) *_{\pi_1(S^3\sm K)}\pi_1(V_2) \\
			&\cong \pi_1(V_1) *_{\pi_1(M(K))}\pi_1(V_2) \\
			&\cong \pi_1(S^1\times S^3) \\
			&\cong\Z.
\end{align*}
It follows from \cite[Theorem~11.7A]{Freedman-Quinn:1990-1} that $S$ is unknotted in $W\cong S^4$. 
\end{proof}

\section{The $X$--double slice genus of a knot}\label{section:double-silce-genus-in-4-manifolds}
In this section, we present knots with nontrivial $X$-double slice genus and give a proof of Theorem~\ref{theorem:main-ribbon}.

\begin{proof}[Proof of Theorem~\ref{theorem:main-ribbon}]
Let $R$ be the knot $9_{46}$, which is ribbon. Let $\eta$ be a simple closed curve that goes around the left band of the obvious Seifert surface for $R$ once and is unknotted in~$S^3$. See Figure~\ref{figure:knot-R}.

\begin{figure}[htb!]
	\includegraphics{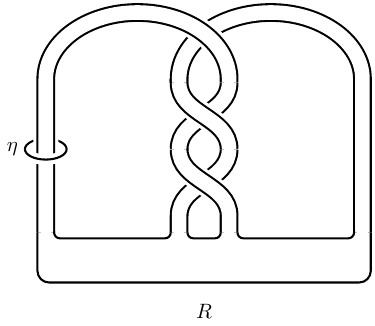}
	\caption{The knot $R$}\label{figure:knot-R}
\end{figure}

Recall that $\L=\Q[t^{\pm 1}]$. We choose an orientation of $R$ so that 
\[
H_1(M(R);\L)\cong \L/\langle 2t-1\rangle \oplus \L/\langle t-2\rangle
\]
and $\eta$ generates the submodule $\L/\langle 2t-1\rangle$. 

For a knot $K$, let $\rho_0(K)$ denote $\rhot(M(K),\epsilon)$ where $\epsilon$ is the abelianization homomorphism $\pi_1(M(K))\to \Z$. Let $J$ be a knot such that $\rho_0(J)>4g+2n$.
For instance, we may choose~$J$ to be the connected sum of $N$ copies of the left-handed trefoil $J_0$ where $N$ is an integer greater than $3g+\frac{3}{2}n$, noticing that $\int_{S^1}\sigma_{J_0}(\omega)\,d\omega=4/3$ (see Lemma~\ref{lemma:properties-rho-invariant}~(4)). 

For $1\le i \le 2g+n+1$, let $\eta_i=\eta$ and let $K_i=R(\eta_i, J)$, a satellite knot with pattern~$R$, companion~$J$, and axis~$\eta_i$. Note that each $K_i$ is a ribbon knot since there is a band surgery on the left band of the obvious Seifert surface for $K_i$ that produces a 2-component unlink.
Also $K_i$ is algebraically doubly slice since the linking number between $\eta_i$ and $R$ is zero, the knot $K_i$ and $R$ share the same Seifert forms, and the Seifert form associated with the obvious Seifert surface of $R$ is hyperbolic.  

Let $K=\#_{i=1}^{2g+n+1} K_i$. Then, the knot $K$ is algebraically doubly slice and ribbon. 

Suppose that there exists a closed 4-manifold $W$ such that $\pi_1(W)\cong \Z$, $b_2(W)\le 2g+n$ and there exists an $H_1$--embedding of~$M(K)$ in~$W$. By taking the connected sum of $W$ and copies of $\C\P^2$ if necessary, we may assume that $b_2(W)=2g+n$. Then, there exist 4-manifolds $W_1$ and~$W_2$ satisfying the properties in Theorem~\ref{theorem:H1-embedding}, with $r=2g+n$.

Let $A_{2t-1}$ and $A_{t-2}$ be the $(2t-1)$--primary part and the $(t-2)$--primary part of $H_1(M(K);\L)$, respectively. Note that 
\[
H_1(M(K);\L)\cong \left(\L/\langle 2t-1\rangle\right)^{2g+n+1} \oplus \left(\L/\langle t-2\rangle\right)^{2g+n+1} = A_{2t-1}\oplus A_{t-2},
\]
and $A_{2t-1}$ is generated by $\{\eta_1,\eta_2, \ldots, \eta_{2g+1}\}$.

Note that the sequence
\[
\L^{2g+n}\cong H_2(W;\L)\xrightarrow{\partial_*} H_1(M(K);\L)\xrightarrow{((f_1)_*,(f_2)_*)} H_1(W_1;\L)\oplus H_1(W_2;\L)\to 0
\]
is exact by Theorem~\ref{theorem:H1-embedding}~(3) and $A_{2t-1}$ is not generated by $2g+n$ elements. It follows that the image of~$\partial_*$ does not contain $A_{2t-1}$. Therefore, there exists some $\eta_i\in H_1(M(K);\L)$ that is not contained in the image of $\partial_*$. By rearranging $K_i$ if necessary, we may assume $\eta_1\notin \Im \partial_*$. Then, either $(f_1)_*(\eta_1)$ or $(f_2)_*(\eta_1)$ is nontrivial. By exchanging $W_1$ with $W_2$ if necessary, we may assume that $(f_1)_*(\eta_1)$ is nontrivial.

Let $A'_{2t-1}$ be the $(2t-1)$--primary part of $H_1(W_1;\L)$. For a group $G$, we define inductively the \emph{$n$th derived subgroup} of $G$ for each integer $n\ge 0$ by letting $G^{(0)}=G$ and $G^{(n+1)}=[G^{(n)},G^{(n)}]$ for $n\ge 0$. Note that there is an isomorphism
\begin{align*}
\pi_1(W_1)/\pi_1(W_1)^{(2)}	&\cong \left(\pi_1(W_1)^{(1)}/\pi_1(W_1)^{(2)}\right)\rtimes \left(\pi_1(W_1)/\pi_1(W_1)^{(1)}\right)\\
& \cong H_1(W_1;\Z[t^{\pm 1}])\rtimes \langle t\rangle.
\end{align*}

Let $G= A'_{2t-1}\rtimes \langle t\rangle$, and let $\phi\colon \pi_1(M(K))\to G$ be the composition
\begin{multline*}
\pi_1(M(K))\to \pi_1(W_1)\to \pi_1(W_1)/\pi_1(W_1)^{(2)} \to \\
H_1(W_1;\Z[t^{\pm 1}])\rtimes \langle t\rangle \to H_1(W_1;\L)\rtimes \langle t\rangle \to G, 
\end{multline*}
where the last map is induced by the projection $H_1(W_1;\L)\to A'_{2t-1}$.

For convenience and brevity, we will write $\phi$ for restrictions of $\phi$ to subspaces of $W_1$. Notice that the group $G$ is poly-torsion-free-abelian since $A'_{2t-1}$ and $\langle t\rangle$ are torsion-free abelian. Therefore, $G$ is amenable and in Strebel's class $D(\Z)$, and by Theorem~\ref{theorem:H1-embedding}~(4) we obtain
\[
|\rhot(M(K),\phi)|\le 4g+2n.
\]
We will show that $\rhot(M(K),\phi)> 4g+2n$ using a different computation, which will lead to a contradiction and complete the proof.

Note that $\phi\colon \pi_1(M(K))\to G$ can be decomposed as $\phi=h\circ \psi$, where $\psi$ is the composition
\[
\psi\colon \pi_1(M(K))\to \pi_1(M(K))/\pi_1(M(K))^{(2)}\to H_1(M(K);\L)\rtimes \langle t\rangle
\] 
and
\[
h\colon H_1(M(K);\L)\rtimes \langle t\rangle \to H_1(W_1;\L)\rtimes \langle t\rangle \to G
\]
is defined by $h=((f_1)_*,\text{id})$.

Let $C$ be the standard cobordism with $\partial C=\partial_+ C\sqcup \partial_-C$ where
	\[
	\partial_+ C = \bigsqcup_{i=1}^{2g+n+1}M(K_i) \quad\mbox{ and }\quad \partial_- C= -M(K)
	\]
(see \cite[p.~113]{Cochran-Orr-Teichner:2004-1}). One can see that $\pi_1(C)\cong \pi_1(M(K))/N$ where $N$ is the normal subgroup generated by the 0--framed longitudes $\ell_i$ of $K_i$, $1\le i\le 2g+n+1$. 

Since $\psi(\ell_i)$ is trivial for each $i$, so is $\phi(\ell_i)$, and  it follows that $\phi\colon \pi_1(M(K))\to G$ extends to $\pi_1(C)\to G$, which we also denote by $\phi$. 
For clarity, we denote by $\phi_i$ the restriction of $\phi\colon \pi_1(C)\to G$ to $\pi_1(M(K_i))$, $1\le i\le 2g+n+1$. 
From the construction of $C$, one can readily see that for any group~$\Gamma$, the map $H_2(\partial C;\Z\Gamma)\to H_2(C;\Z\Gamma)$ is surjective and hence $\sign_\Gamma^{(2)}(C)=0$ (see \cite[Lemma~2.4]{Cochran-Harvey-Leidy:2009-1} and its proof). 
In particular, 
\[
\sign_G^{(2)}(C)=\sign(C)=0.
\]

Therefore, it follows that $\rhot(\partial C,\phi) = \sign_G^{(2)}(C)-\sign(C)=0$, and we have
\[
 \rhot(M(K),\phi) = \sum_{i=1}^{2g+n+1}\rhot(M(K_i),\phi_i).
\] 
Let $\nu R$ denote the open tubular neighborhood of $R$ in $S^3$.
Since $\phi_i(\ell_i)$ is trivial for each $i$, the map $\phi_i$ on $S^3\sm \nu R\subset M(K_i)$ extends to $M(R)$, and we denote it by $\phi_i'$. 

By \cite[Lemma~2.3]{Cochran-Harvey-Leidy:2009-1}, 
\[
\rhot(M(K_i),\phi_i) = \rhot(M(R),\phi_i') + a_i\rho_0(J),
\]
where $a_i=1$ if $\phi_i'(\eta_i)$ is nontrivial, and $a_i=0$ otherwise. 

Since $(f_1)_*(\eta_1)$ is nontrivial in $H_1(W_1;\L)$ and it is $(2t-1)$-torsion, it follows that~$\phi_1'(\eta_1)$ is nontrivial in $G$ and $a_1=1$. Therefore,
\[
\sum_{i=1}^{2g+n+1}\left(\rhot(M(R),\phi_i') + a_i\rho_0(J)\right) 
\ge \rho_0(J) + \sum_{i=1}^{2g+n+1}\rhot(M(R),\phi_i').
\]
One can see that $\phi_i'\colon \pi_1(M(R))\to G$ extends over the complement of a ribbon disk for $R$ in $D^4$ which is obtained by cutting the right band of the obvious Seifert surface for $R$ (note that a curve dual to the right band maps to zero under $\phi_i'$). 
Therefore, $\rhot(M(R),\phi_i')=0$ for all $i$. It follows that
\[
\rhot(M(K),\phi) = \sum_{i=1}^{2g+n+1}\left(\rhot(M(R),\phi_i') + a_i\rho_0(J)\right) \ge \rho_0(J) >4g+2n,
\]
which is a contradiction.

The last part of Theorem~\ref{theorem:main-ribbon} that $\gdsx(K)>g$ follows from Theorem~\ref{theorem:genus-to-embedding}~(2).
\end{proof}

We finish this section with the following theorem, which shows the relationship between $\gdsx(K)$ and $\gds^{S^4}(K)$.
\begin{theorem}\label{theorem:inequalities-double-slice-genus}
Let $K$ be a knot in $S^3$ and $X$ be a simply connected closed 4-manifold. Then,
\[
\gdsx(K)\le \gds^{S^4}(K).
\]
\end{theorem}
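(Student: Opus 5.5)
We argue by direct construction. Suppose $\gds^{S^4}(K)=g$, realized by a decomposition $S^4=Y_1\cup_{S^3}Y_2$ with $\partial Y_i=S^3$ and surfaces $F_i\subset Y_i$ such that $F=F_1\cup_K F_2$ is a $\Z$--surface of genus $g$ in $S^4$. Both $Y_i$ are simply connected, since $S^4$ is. The goal is a decomposition $X=X_1\cup_{S^3}X_2$ and surfaces in the $X_i$ realizing a $\Z$--surface of genus $g$; this gives $\gdsx(K)\le g$.

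The natural candidate is $X_1=Y_1$ and $X_2=Y_2\#X$, where the connected sum is taken at a point of $\operatorname{int}(Y_2)$ away from $F_2$. Then
\[
X_1\cup_{S^3}X_2=(Y_1\cup_{S^3}Y_2)\#X=S^4\#X\cong X,
\]
and we take the surfaces $F_1\subset X_1$ and $F_2\subset X_2$ unchanged, so the genus is still $g$. Both $X_i$ are 4-manifolds with boundary $S^3$. Each $X_i$ is simply connected, since $X_2$ is a connected sum of simply connected manifolds, although the definition only requires $X=X_1\cup X_2$.

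It remains to check that $F$ is a $\Z$--surface in $S^4\#X$. The complement splits as
\[
(S^4\#X)\sm F=\bigl((S^4\sm F)\sm B^4\bigr)\cup_{S^3}\bigl(X\sm B^4\bigr).
\]
Removing a 4-ball does not change $\pi_1$ in dimension 4, and $X\sm B^4$ is simply connected. By Seifert--Van Kampen over the simply connected $S^3$,
\[
\pi_1\bigl((S^4\#X)\sm F\bigr)\cong \pi_1(S^4\sm F)*\pi_1(X\sm B^4)\cong\Z.
\]

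The main obstacle is the minimization in the definition of $\gdsx$, which ranges over all decompositions of $X$. Our construction supplies one admissible decomposition and one admissible pair of surfaces. For this to be legitimate we must identify $X_1\cup_{S^3}X_2$ with $X$ by a homeomorphism, using $S^4\#X\cong X$. Any orientation issue is harmless: we take the connected sum with $X$ itself, and we can reverse the roles of $Y_1$ and $Y_2$ if needed. Once this is in place, $g_{X_1,X_2}(K)\le g$, and hence $\gdsx(K)\le\gds^{S^4}(K)$.
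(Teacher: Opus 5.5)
Your proof is correct, but it uses a different splitting of $X$ from the paper's. The paper fixes a splitting $X=X_1\cup_{S^3}X_2$ realizing $\gdsx(K)$. It regards the optimal splitting of $S^4$ as $D^4\cup_{S^3}D^4$ and pushes each $F_i$ into the collar of $S^3$ in $X_i$. It then applies Seifert--Van Kampen to the bicollar containing $F_1\cup_K F_2$ together with the simply connected pieces $X_i$ minus their collars.

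You instead leave the optimal splitting $S^4=Y_1\cup_{S^3}Y_2$ and its surfaces untouched, and absorb all of $X$ into one side via $X\cong Y_1\cup_{S^3}(Y_2\#X)$. Your only inputs are $S^4\#X\cong X$ and the fact that deleting a $4$--ball does not change $\pi_1$.

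Your route has two advantages. You never need to know that the pieces $Y_i$ are $4$--balls, which is a Schoenflies/Freedman fact the paper uses tacitly when it writes $S^4=D_1\cup D_2$. You also never need to isotope $F_i$ into a collar.

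The paper's route gives a stronger, splitting-independent bound. Its argument never uses minimality of the chosen splitting, so it shows $g_{X_1,X_2}(K)\le\gds^{S^4}(K)$ for every splitting $X=X_1\cup_{S^3}X_2$. Yours bounds $g_{X_1,X_2}(K)$ only for the particular splitting you build, which is enough for the minimum defining $\gdsx(K)$.
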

\begin{proof}
For brevity, let $g=\gds^{S^4}(K)$. Let $D_1$ and $D_2$ be copies of $D^4$. 
Then, there exist properly embedded surfaces $F_i\subset D_i$ with $\partial F_i=K$, $i=1, 2$, such that $F_1\cup_K F_2$ is a $\Z$--surface in $D_1\cup_{S^3} D_2=S^4$ and $\text{genus}(F_1\cup_K F_2)=g$. 

Let $X_1$ and $X_2$ be 4-manifolds with boundary $S^3$ such that $X_1\cup_{S^3} X_2=X$ and $g_{X_1, X_2}(K)=\gdsx(K)$.
We can embed $F_i$ into the collar of $S^3$ in $X_i$, $i=1, 2$, and now $F_1\cup_K F_2$ is embedded in the bicollar of $S^3$, say $B$, in $X$. 
Moreover, $\pi_1(B\sm (F_1\cup_K F_2))\cong \Z$ since it is a $\Z$--surface in~$S^4$. 
One can readily show that $\pi_1(X\sm (F_1\cup_K F_2))\cong \Z$ using the Seifert-Van Kampen Theorem since all $X_i$ are simply connected.

Therefore, 
\begin{equation*}
\gdsx(K)=g_{X_1, X_2}(K)\le \text{genus}(F_1\cup_K F_2) = g=\gds^{S^4}(K). \qedhere
\end{equation*}
\end{proof}

\section{The double slice genus and knots whose prime power fold branched cyclic covers embed in $S^4$}\label{section:vanishing-CG-invariants}

In this section, we give a proof of Theorem~\ref{theorem:main-vanishing-CG}. 

\begin{proof}[Proof of Theorem~\ref{theorem:main-vanishing-CG}]
For $n\ge 1$, let $\Phi_n(t)$ denote the $n$th cyclotomic polynomial. 
Let $R$ be a knot with $H_1(M(R);\L)\cong \L/\langle\Phi_{30}(t)\rangle$ and let $\eta$ be a simple closed curve in $\pi_1(S^3\sm R)^{(1)}$ that generates $H_1(M(R);\L)$ and is unknotted in $S^3$. 
By Lemma~\ref{lemma:universal-bound}, there exists a constant~$D$ such that $|\rhot(M(R\# (-R)),\phi)|\le D$ for all $\phi$. 

Recall that $\rho_0(J)$ denotes $\rhot(M(J),\epsilon)$ where $\epsilon\colon \pi_1(M(J))\to \Z$ is the abelianization. 
Let~$J$ be a knot such that 
\[
\rho_0(J)> 4g+(2g+1)D.
\] 
For a choice of $J$, one may take a connected sum of sufficiently many copies of the left-handed trefoil (see Lemma~\ref{lemma:properties-rho-invariant}). 

For $1\le i\le 2g+1$, let $\eta_i=\eta$ and let 
\[
K_i=R(\eta_i, J)\# (-R) = (R\# (-R))(\eta_i,J),
\]
a satellite knot with companion~$J$, pattern~$R\#(-R)$, and axis~$\eta_i$. Now let 
\[
K=\#_{i=1}^{2g+1} K_i.
\] 
We will show that $K$ has the desired properties.

Since $\eta\in \pi_1(S^3\sm R)^{(1)}$, each $K_i$ has the same Seifert form as $R\#(-R)$. Since $R\#(-R)$ is (algebraically) doubly slice, it follows that each $K_i$ is algebraically doubly slice, and so is $K$. 

The only irreducible factor of the Alexander polynomial of $K$ is $\Phi_{30}(t)$, and therefore $H_1(\Sigma_n(K))=0$ for every prime power $n$ by \cite[Theorem~1.2]{Livingston:2002-1}. Therefore, Property ~(2) follows.

Suppose that there exists an $H_1$--embedding of $M(K)$ in a closed 4-manifold $W$ with $\pi_1(W)\cong~\Z$ and $b_2(W)\le 2g$. By taking the connected sum with copies of $\C P^2$ if necessary, we may assume that $b_2(W)=2g$. 

Then, there exist 4-manifolds $W_1$ and $W_2$ which satisfy the properties in Theorem~\ref{theorem:H1-embedding}. By Theorem~\ref{theorem:H1-embedding}~(3), we have an exact sequence
\[
\L^{2g}\cong H_2(W;\L)\xrightarrow{\partial_*} H_1(M(K);\L)\xrightarrow{((f_1)_*,(f_2)_*)} H_1(W_1;\L)\oplus H_1(W_2;\L)\to 0.
\]
Since 
\[
H_1(M(K);\L)\cong \bigoplus^{4g+2} \L/\langle \Phi_{30}(t)\rangle
\]
and the submodule generated by $\eta_1,\ldots, \eta_{2g+1}$ cannot be generated by $2g$ elements, there exists some $\eta_i$ that is not contained in the image of $\partial _*$. Rearranging $K_i$ if necessary, we may assume that $\eta_1$ is not contained in the image of $\partial_*$. Therefore, either $(f_1)_*(\eta_1)$ or $(f_2)_*(\eta_1)$ is nontrivial.
We may assume that $(f_1)_*(\eta_1)$ is nontrivial by exchanging $W_1$ and $W_2$ with each other if necessary. 

Let $G=H_1(W_1;\L)\rtimes \langle t\rangle$, and let $\phi\colon \pi_1(W_1)\to G$ be the composition
\[
\pi_1(W_1)\to \pi_1(W_1)/\pi_1(W_1)^{(2)}\cong H_1(W_1;\Z[t^{\pm 1}])\rtimes \langle t\rangle \to G.
\]
By an abuse of notation, we write $\phi$ for restrictions of $\phi$ to subspaces of $W_1$. 

Since $\partial W_1=M(K)$, we have 
\[
\rhot(M(K),\phi)=\sign_G^{(2)}(W_1)-\sign(W_1).
\] 
Note that $G$ is poly-torsion-free-abelian. Therefore, $G$ is amenable and in Strebel's class $D(\Z)$. It follows that $|\rhot(M(K),\phi)|\le 4g$ by Theorem~\ref{theorem:H1-embedding}~(4). We will show that $\rhot(M(K),\phi)> 4g$ using a different computation, which will lead to a contradiction and complete the proof.

Using an argument similar to that in the proof of Theorem~\ref{theorem:main-ribbon}, we obtain that 
\[
 \rhot(M(K),\phi) = \sum_{i=1}^{2g+1}\rhot(M(K_i),\phi_i).
\] 
Since $\phi_i(\ell_i)$ is trivial for each $i$, the map $\phi_i$ on $S^3\sm \nu (R\# (-R))\subset M(K_i)$ extends to $M(R\#(-R))$, and we denote it by $\phi_i'$. 

By \cite[Lemma~2.3]{Cochran-Harvey-Leidy:2009-1}, 
\[
\rhot(M(K_i),\phi_i) = \rhot(M(R\#(-R)),\phi_i') + a_i\rho_0(J),
\]
where $a_i=1$ if $\phi_i'(\eta_i)$ is nontrivial, and $a_i=0$ otherwise. 

Note that $\phi_i'(\eta_i) = \phi_i(\eta_i) = ((f_1)_*(\eta_i),0)$ and it follows that $\phi_1'(\eta_1)$ is nontrivial. Therefore, 
\begin{align*}
\sum_{i=1}^{2g+1}\rhot(M(K_i),\phi_i) &= \sum_{i=1}^{2g+1}\left(\rhot(M(R\#(-R)),\phi_i') + a_i\rho_0(J)\right)\\
&\ge \rho_0(J) + \sum_{i=1}^{2g+1}\rhot(M(R\#(-R)),\phi_i') \\
&> (4g+(2g+1)D) - (2g+1)D\\
&= 4g.
\end{align*}
Therefore, $\rhot(M(K),\phi)> 4g$, which is a contradiction.	

One can readily see that $\gds^{S^4}(K)> g$ using Theorem~\ref{theorem:genus-to-embedding}~(2) with $X=S^4$. It follows from $\gds(K)\ge \gds^{S^4}(K)$ that $\gds(K)> g$. 
\end{proof}

\section{The double stabilizing number of a knot}
\label{section:doubly-stabilizing-number}

In this section, we investigate the double stabilizing number of a knot and prove Theorem~\ref{theorem:main-dsn}.

\subsection{The double stabilizing number}
\label{subsection:double-stabilizing-number}

\begin{theorem}\label{theorem:stably-doubly-slice-Arf-invariant}
Let $K$ be a knot in $S^3$.
\begin{enumerate}
	\item $K$ is stably doubly slice if and only if it is stably $H$--slice.
	\item Suppose that $K$ has vanishing Arf invariant. Then,
	\[
	2\sn(K)\le \dsn(K)\le  \gds(K).
	\]
\end{enumerate}
\end{theorem}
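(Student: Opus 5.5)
For (1), I will argue both directions directly from the definitions. If $K$ is $(W_m,W_n)$--doubly slice, then there are disks $D_1\subset W_m$ and $D_2\subset W_n$ with $D_1\cup_K D_2$ a $\Z$--sphere in $W_m\cup_{S^3}W_n$. By Remark~\ref{remark:unknotting-conjecture}(2), each $D_i$ is null-homologous. Hence $K$ is $H$--slice in $W_m$, so it is stably $H$--slice.

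For the converse, suppose $K$ bounds a null-homologous disk $D$ in $W_n=D^4\# nS^2\times S^2$. I want to upgrade $D$ to a disk whose complement has fundamental group $\Z$. Null-homology forces $H_1(W_n\sm D)\cong\Z$, but $\pi_1$ may be larger. The tool I would use is the existence theorem for $\Z$--surfaces after further stabilization: since $K$ has vanishing Arf invariant, $K$ bounds a $\Z$--disk in some $D^4\# N S^2\times S^2$. One route is to tube $D$ into extra $S^2\times S^2$ summands to kill the generators of $\pi_1$ of the complement. Alternatively, and more cleanly, I would take a genus $g$ $\Z$--surface $F$ in $D^4$ (for instance a pushed-in Seifert surface, whose complement has $\pi_1\cong\Z$ after suitable tubing, or use Feller--Lewark) and surger its $g$ handles inside $g$ copies of $S^2\times S^2$. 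The standard ambient surgery along a half-basis of curves uses one $S^2\times S^2$ per surgery. The Arf invariant is the only obstruction to the framing condition, and it vanishes here. This produces a $\Z$--disk $D_1\subset W_m$.

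Given a $\Z$--disk $D_1\subset W_m$, I take $D_2\subset W_m$ to be its mirror and form the double. Gluing $W_m\cup_{S^3}\overline{W_m}$ gives $DW_m\cong 2mS^2\times S^2$, and Seifert--van Kampen applied to the two complements, each with $\pi_1\cong\Z$ generated by a meridian, gives $\pi_1\cong\Z$. So $K$ is $(W_m,W_m)$--doubly slice. I expect the main obstacle in (1) to be this surgery step, namely getting $\pi_1(\text{complement})\cong\Z$ rather than only null-homology. I would cite the known result that the Arf invariant is the complete obstruction to bounding a $\Z$--disk in a stabilized $D^4$ (Conway--Nagel or Feller--Lewark type results) rather than redo it.

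For (2), the lower bound is immediate. If $K$ is $(W_m,W_n)$--doubly slice, then by Remark~\ref{remark:unknotting-conjecture}(2) $K$ is $H$--slice in both $W_m$ and $W_n$. Hence $\sn(K)\le m$ and $\sn(K)\le n$, so $2\sn(K)\le m+n$. For the upper bound, take an unknotted surface $F=F_1\cup_K F_2$ in $S^4$ of genus $g=\gds(K)$, with genera $g_1+g_2=g$. Inside $D^4$, each $F_i$ is a $\Z$--surface (or at least can be arranged so), and I do $g_i$ ambient surgeries on $F_i$ using $g_i$ copies of $S^2\times S^2$, exactly as in (1). The key point I must check is that these surgeries preserve $\pi_1\cong\Z$ for the glued complement. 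I would verify this by Seifert--van Kampen as in the proof of Theorem~\ref{theorem:inequalities-double-slice-genus}: the added $S^2\times S^2$ summands are simply connected, and the surgery kills the relevant handle curves. This yields that $K$ is $(W_{g_1},W_{g_2})$--doubly slice, so $\dsn(K)\le g$.
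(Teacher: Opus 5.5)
Your forward direction in (1) and your lower bound $2\sn(K)\le\dsn(K)$ are exactly the paper's arguments.

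\textbf{Converse of (1): a different route.} You build a $\Z$-disk $D_1\subset W_m$ by surgering a $\Z$-surface in $D^4$ and then double it. A pushed-in Seifert surface already has complement with $\pi_1\cong\Z$, so no tubing is needed. The doubling step is correct. In fact any two $\Z$-disks glue to a $\Z$-sphere, because both maps $\pi_1(S^3\sm K)\to\Z$ are the abelianization.

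The paper instead takes a closed $\Z$-surface $F=F_1\cup_K F_2\subset S^4$ of genus $\gds(K)$, surgers both halves, and checks $\pi_1$ of the glued complement. That single construction proves (1) and $\dsn(K)\le\gds(K)$ at once. Your route only gives $\dsn(K)\le 2g_3(K)$. Its merit is that it produces an honest $\Z$-disk in $W_{g_3(K)}$, which the paper's halves need not be.

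\textbf{The $\pi_1$ step.} In both parts, the step you defer to a citation is the only real content beyond the Conway--Nagel surgery, and it needs no citation. After surgery, $W\sm\nu S$ is $S^4\sm\nu F$ (or $D^4\sm\nu F$) with copies of $D^2\times(S^2\sm\nu S^0)$ attached along $\gamma\times(S^2\sm\nu S^0)$. Seifert--van Kampen then shows that $\pi_1(W\sm S)$ is $\pi_1(S^4\sm F)\cong\Z$ modulo the pushoffs $\gamma\times\{*\}$. So it is a quotient of $\Z$, and it equals $\Z$ because the surgered sphere (or disk) is null-homologous, which gives $H_1$ of the complement $\cong\Z$. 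The same computation works for a pushed-in Seifert surface in $D^4$.

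Your phrase ``the added $S^2\times S^2$ summands are simply connected'' misdescribes this. The surgery is along curves lying on the surface, so the new complement is not a connected sum with copies of $S^2\times S^2$.

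\textbf{A false claim in (2).} You assert that each half $F_i$ is, or can be arranged to be, a $\Z$-surface in $D^4$. This is false. For example, $9_{46}$ is doubly slice, so $\gds=0$, yet neither half of an unknotted sphere meeting $S^3$ in $9_{46}$ can be a $\Z$-disk: a $\Z$-disk in $D^4$ forces $\Delta_K\doteq 1$.

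If you used this claim to argue ``surger each half to a $\Z$-disk and glue, as in (1)'', that would be a genuine gap. Your argument survives only because you go on to compute $\pi_1$ of the glued complement from $\pi_1(S^4\sm F)\cong\Z$, which is what the paper does. You should drop the claim.
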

\begin{proof}
It is obvious that a stably doubly slice knot is stably $H$--slice. Suppose that $K$ is stably $H$--slice. 
Let $g=\gds(K)$ for brevity. We will show that $K$ is $(D^4\# mS^2\times S^2, D^4\# nS^2\times S^2)$--doubly slice for some integers $m$ and $n$ such that $g=m+n$, and hence $K$ is stably doubly slice. 

Let $F$ be a closed $\Z$--surface in $S^4$ such that $K=S^3\cap F$ and $\text{genus}(F)=g$. 
We have $S^4=D_1^4\cup_{S^3} D_2^4$, where $D_i^4\cong D^4$, $i=1,2$. 
Let $F_i=F\cap D_i^4$ and let $g_i$ be the genus of $F_i$, $i=1, 2$. Then, $\partial F_1=\partial F_2=K$ and $g=g_1+g_2$. 

Recall that $K$ is stably $H$--slice if and only if $K$ has vanishing Arf invariant \cite{Schneiderman:2010-1}. 
Since~$K$ has vanishing Arf invariant, it follows from the proof of \cite[Theorem~5.15]{Conway-Nagel:2020-1} that one can perform surgery along curves $\gamma_1^i, \ldots, \gamma_{g_i}^i$ on $F_i$ in $D_i^4$ and obtain properly embedded null-homologous disks $D_i$ in $D_i^4\# g_iS^2\times S^2$ such that $\partial D_i=K$, $i=1,2$. 
Let $W_i=D_i^4\# g_iS^2\times S^2$, $i=1, 2$. 
Noticing $\partial W_1=\partial W_2=S^3$, let $W=W_1\cup_{S^3}W_2$ and  let $S=D_1\cup_K D_2$. Then, $W\cong S^4\# gS^2\times S^2$, $S\cong S^2$, and $S\cap S^3=K$.

Furthermore, $\pi_1(W\sm S)\cong \Z$, which can be seen from the following observation: we have $\pi_1(S^4\sm F)\cong \Z$, and $W$ and $S$ are obtained by performing surgeries on $S^4$ and $F$, respectively, along a collection of curves $\{\gamma_j^i\mid 1\le j\le g_i\text{ and }i=1,2\}$ on~$F$.
The surgery on~$S^4$ is performed by taking off $\gamma_j^i\times \text{int}(D^3)$ and then taking a union with $D^2\times S^2$ along the common boundary $\gamma_j^i\times S^2$. 
While doing this, $F$ is surgered to~$S$ by taking off $\gamma_j^i\times \text{int}(D^1)$ from $F$ and then taking a union with $D^2\times S^0\subset D^2\times S^2$ along the common boundary $\gamma_j^i\times S^0\subset \gamma_j^i\times S^2$. 
Let $\nu F$, $\nu S$, and $\nu S^0$ denote the open tubular neighborhoods of $F$, $S$, and $S^0$ in $S^4$, $W$, and $S^2$, respectively.
It follows that $W\sm \nu S$ is a union of $S^4\sm \nu F$ and $g D^2\times (S^2\sm \nu S^0)$ along $\gamma^i_j\times (S^2\sm \nu S^0)$. 
Noticing that $[\gamma^i_j\times \{*\}]=0\in \pi_1(S^4\sm F)$ and a generating curve of $\pi_1(S^2\sm \nu S^0)\cong \Z$ is homotopic to a meridian of $S^4\sm F$, one can readily see that $\pi_1(W\sm S)\cong \Z$ using the Seifert--Van Kampen Theorem. 

Therefore, $K$ is $(W_1,W_2)$--doubly slice. This completes the proof of~(1). 
One can readily see that the above proof also shows that $\dsn(K)\le g=\gds(K)$. 

We complete the proof of (2) by showing that $2\sn(K)\le \dsn(K)$. 
Let $n=\dsn(K)$. Then,~$K$ is $(X_1, X_2)$--doubly slice where $X_i=D^4\# n_iS^2\times S^2$ for some integers $n_i$ such that $n=n_1+n_2$. 
Therefore, there exist disks $D_i$ in $X_i$ with $\partial D_i=K$ such that $D_1\cup D_2$ is a $\Z$--sphere in $X_1\cup X_2$. The disks $D_i$ are null-homologous in $X_i$ since $D_1\cup D_2$ is a $\Z$--sphere and hence null-homologous in $X_1\cup X_2$ (see \cite[Lemma~5.1]{Conway-Powell:2023-1}).
It follows that
\[
2\sn(K)\le 2\cdot\text{min}\{n_1,n_2\}\le n_1+n_2=n=\dsn(K),
\]
and this proves~(2).
\end{proof}

The following theorem give the relationship between the double stabilizing number and $H_1$--embeddings. 
\begin{theorem}\label{theorem:H_1-embedding-and-dsn}
Let $m$ be a nonnegative integer and $K$ be a stably doubly slice knot. If $\dsn(K)\le m$, then there exists a closed 4-manifold $W$ with $\pi_1(W)\cong \Z$ and $H_2(W)\cong \Z^{2m}$ such that there is an $H_1$--embedding of $M(K)$ in $W$. 
\end{theorem}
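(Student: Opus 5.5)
The plan is to reduce the statement directly to Theorem~\ref{theorem:genus-to-embedding}~(1). Suppose $\dsn(K)\le m$, and set $n=\dsn(K)$. By definition, there are integers $n_1,n_2\ge 0$ with $n_1+n_2=n$ such that $K$ is $(X_1,X_2)$--doubly slice, where $X_i=W_{n_i}=D^4\# n_iS^2\times S^2$. Equivalently, $g_{X_1,X_2}(K)=0$. Each $X_i$ is simply connected with boundary $S^3$. The closed manifold is $X=X_1\cup_{S^3}X_2\cong S^4\# nS^2\times S^2$, so
\[
b_2(X)=b_2(X_1)+b_2(X_2)=2n_1+2n_2=2n.
\]

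Applying Theorem~\ref{theorem:genus-to-embedding}~(1) with genus bound $g=0$ gives a closed 4-manifold $W'$ with $\pi_1(W')\cong\Z$ and $H_2(W')\cong\Z^{0+2n}=\Z^{2n}$. It also gives an $H_1$--embedding $M(K)\hookrightarrow W'$.

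It remains to pass from $2n$ to exactly $2m$. Take
\[
W=W'\#(m-n)S^2\times S^2,
\]
with the connected sums performed away from the image of $M(K)$. This changes neither the fundamental group nor $H_1$, since $S^2\times S^2$ is simply connected. So $\pi_1(W)\cong\Z$, and the composite $M(K)\to W'\sm B^4\to W$ is still an isomorphism on $H_1$, hence an $H_1$--embedding.

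For $H_2$, Mayer--Vietoris for the connected sum gives
\[
H_2(W)\cong H_2(W')\oplus \Z^{2(m-n)}\cong\Z^{2m}.
\]
Alternatively, one could stabilize by adding $S^2\times S^2$ summands to the $X_i$ beforehand: $K$ remains doubly slice in $(W_{n_1+(m-n)},W_{n_2})$ because the $\Z$--sphere's complement keeps $\pi_1\cong\Z$ under connected sum with a simply connected manifold away from the sphere. Either route works; I would use the first since it is more direct.

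The only point needing any care is the stabilization step, specifically that removing a ball disjoint from $M(K)$ and gluing in punctured $S^2\times S^2$ preserves $\pi_1\cong\Z$ and the $H_1$--isomorphism. This follows from Seifert--Van Kampen, because $S^3$ and punctured $S^2\times S^2$ are simply connected. Everything else is immediate from the earlier theorem.
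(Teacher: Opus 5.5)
Your proposal is correct and is essentially the paper's proof: realize $\dsn(K)$ by a $\Z$--sphere in $(D^4\# n_1S^2\times S^2)\cup_{S^3}(D^4\# n_2S^2\times S^2)$, and apply Theorem~\ref{theorem:genus-to-embedding}~(1) with $g=0$ and $b_2(X)=2n$. The only difference is that you explicitly justify passing from $\Z^{2\dsn(K)}$ to $\Z^{2m}$ by stabilizing with $m-n$ copies of $S^2\times S^2$ (either on $W'$ or on $X_1$), whereas the paper compresses this step into ``we may assume that $\dsn(K)=m$.''
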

\begin{proof}
We may assume that $\dsn(K)=m$. Then, $K$ is $(X_1, X_2)$--doubly slice where $X_i=D^4\# m_iS^2\times S^2$ for some integers $m_i$, $i=1,2$, such that $m=m_1+m_2$. Notice that $\text{genus}(D_1\cup D_2)=0$, $H_1(X_1\cup X_2)=0$, and $b_2(X_1\cup X_2)=2m$. Now the theorem follows from Theorem~\ref{theorem:genus-to-embedding}~(1) with $g=0$ and $n=2m$.
\end{proof}

Using Theorem~\ref{theorem:H_1-embedding-and-dsn}, we give a proof of Theorem~\ref{theorem:main-dsn}.
\begin{proof}[Proof of Theorem~\ref{theorem:main-dsn}]
By Theorem~\ref{theorem:main-ribbon}, there exists an algebraically doubly slice and ribbon knot~$K$ such that  there is no $H_1$--embedding of $M(K)$ in a closed 4-manifold~$W$ with $\pi_1(W)\cong~\Z$ and $H_2(W)\cong \Z^{2m}$. It follows from Theorem~\ref{theorem:H_1-embedding-and-dsn} that $\dsn(K)>~m$.
\end{proof}

\subsection{The $X$--double stabilizing number}
\label{subsection:X-double-stabilizing-number}

For a simply connected closed 4-manifold $X$, we extend the double stabilizing number $\dsn(K)$ to \emph{the $X$--double stabilizing number}, which we denote by $\dsn_X(K)$, as follows. For 4-manifolds~$X_1$ and~$X_2$ with boundary $S^3$ and  an integer $n\ge 0$, let $X_{i,n} = X_i\# nS^2\times S^2$, $i=1, 2$. 

\begin{definition}\label{X-double-stabilizing-number}
Let $X_1$ and $X_2$ be simply connected 4-manifolds with boundary $S^3$ and $K$ be a knot in $S^3$. 
\begin{enumerate}
	\item The \emph{$(X_1,X_2)$--double stabilizing number of $K$} is 
\[
\dsn_{X_1, X_2}(K) = \min\{m+n\mid K \text{ is } (X_{1,m}, X_{2,n})\text{--doubly slice} \}.
\]
	\item Let $X$ be a simply connected closed 4-manifold. The \emph{$X$--double stabilizing number of~$K$} is
\begin{align*}
\dsn_X(K)=\text{min} \{\dsn_{X_1, X_2}(K) \mid\, & X_i\text{ are 4-manifolds with boundary }S^3, i=1,2,\\
&  \text{such that } X=X_1\cup_{S^3}X_2\}.
\end{align*}
\end{enumerate}
\end{definition}

We note that $\dsn(K) = \dsn_{S^4}(K)$. The following theorem extends Theorems~\ref{theorem:H_1-embedding-and-dsn} and \ref{theorem:main-dsn}.

\begin{theorem}\label{theorem:H_1-embedding-and-X-dsn}
Let $K$ be a knot in $S^3$ and $m$ be a nonnegative integer. Let $X$ be a simply connected closed 4-manifold.

\begin{enumerate}
	\item If $\dsn_X(K)\le m$, then there exists a closed 4-manifold $W$ with $\pi_1(W)\cong \Z$ and $H_2(W)\cong \Z^{b_2(X)+2m}$ such that there is an $H_1$--embedding of $M(K)$ in $W$. 
	\item For each integer $m\ge 0$, there exists an algebraically doubly slice and ribbon knot $K$ such that $\dsn_X(K)> m$. 
\end{enumerate}
\end{theorem}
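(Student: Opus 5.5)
The plan is to reduce both parts to Theorem~\ref{theorem:genus-to-embedding} and Theorem~\ref{theorem:main-ribbon}, following the proofs of Theorems~\ref{theorem:H_1-embedding-and-dsn} and~\ref{theorem:main-dsn}.

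For~(1), we first pick a decomposition $X=X_1\cup_{S^3}X_2$ with $X_i$ having boundary $S^3$ and $\dsn_{X_1,X_2}(K)=\dsn_X(K)$. We then choose $m_1,m_2\ge 0$ with $m_1+m_2=\dsn_X(K)$ such that $K$ is $(X_{1,m_1},X_{2,m_2})$--doubly slice. Each $X_{i,m_i}=X_i\# m_iS^2\times S^2$ is simply connected with boundary $S^3$, so the setting of Theorem~\ref{theorem:genus-to-embedding}(1) applies.

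Set $Y=X_{1,m_1}\cup_{S^3}X_{2,m_2}$. Since $Y\cong X\#(m_1+m_2)S^2\times S^2$, we have
\[
b_2(Y)=b_2(X)+2\dsn_X(K).
\]
Double sliceness gives $g_{X_{1,m_1},X_{2,m_2}}(K)=0$. Theorem~\ref{theorem:genus-to-embedding}(1) with $g=0$ then yields a closed $W$ with $\pi_1(W)\cong\Z$ and $H_2(W)\cong\Z^{b_2(X)+2\dsn_X(K)}$, together with an $H_1$--embedding of $M(K)$ in $W$.

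To reach exactly $\Z^{b_2(X)+2m}$ when $\dsn_X(K)<m$, we take the connected sum of $W$ with $m-\dsn_X(K)$ copies of $S^2\times S^2$, performed away from $M(K)$. This preserves $\pi_1(W)\cong\Z$ and the $H_1$--embedding, and raises $b_2$ by $2(m-\dsn_X(K))$. Since $H_1(W)\cong\Z$, $H_2(W)$ remains torsion-free, giving the stated group. This padding step is the only point needing care, and it is routine.

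For~(2), let $n=b_2(X)$ and apply Theorem~\ref{theorem:main-ribbon} with $g=m$ and this $n$. This produces an algebraically doubly slice ribbon knot $K$ such that $M(K)$ admits no $H_1$--embedding into any closed $W$ with $\pi_1(W)\cong\Z$ and $b_2(W)\le 2m+b_2(X)$. If $\dsn_X(K)\le m$, part~(1) would give exactly such an embedding with $b_2(W)=b_2(X)+2m$, a contradiction. Hence $\dsn_X(K)>m$. I expect no step to be a genuine obstacle: the real content, the $\rhot$ obstruction, is already contained in Theorem~\ref{theorem:main-ribbon}.
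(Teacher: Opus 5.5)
Your proposal is correct and follows essentially the same route as the paper: part~(1) is Theorem~\ref{theorem:genus-to-embedding}(1) applied with $g=0$ to the pair $(X_{1,m_1},X_{2,m_2})$, and part~(2) combines part~(1) with Theorem~\ref{theorem:main-ribbon} for $g=m$ and $n=b_2(X)$. The only cosmetic difference is in reaching exactly $b_2(X)+2m$: the paper tacitly assumes $\dsn_X(K)=m$, which is legitimate because adding $S^2\times S^2$ summands to the pieces away from the disks preserves the $\Z$--sphere condition, whereas you pad $W$ with $S^2\times S^2$ summands afterwards, which works equally well.
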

\begin{proof}
The proof is similar to the proofs of Theorems~\ref{theorem:H_1-embedding-and-dsn} and \ref{theorem:main-dsn}. The statement~(1) follows from Theorem~\ref{theorem:genus-to-embedding}~(1) with $g=0$ and $n=b_2(X)+2m$. The statement~(2) follows from Theorem~~\ref{theorem:main-ribbon} and the statement~(1).
\end{proof}

\section{The $X$--superslice genus of a knot}
\label{section:superslice-genus}

In this section, we prove Theorems~\ref{theorem:lower-bound-superslice-genus} and \ref{theorem:superslice-genus-large}.

\begin{proof}[Proof of Theorem~\ref{theorem:lower-bound-superslice-genus}]
For brevity, let $g=g_s^X(K)$.
There is a properly embedded null-homologous surface~$F$ in~$X$ with $\partial F=K$ such that $\text{genus}(F) = g$ and~$DF$ is a $\Z$--surface in~$DX$. 
Then, it follows from the proof of Theorem~\ref{theorem:genus-to-embedding} that there exist 4-manifolds~$W_i$, $i=1,2$, with boundary $M(K)$ such that $W_1=W_2$ and, for $W=W_1\cup_{M(K)} W_2$, we have $\pi_1(W)\cong \Z$, $H_2(W)\cong \Z^{4g+2b_2(X)}$ and the inclusion $M(K)\hookrightarrow W$ is an  $H_1$--embedding. 

Then, there is an exact sequence by Theorem~\ref{theorem:H1-embedding} and its proof such that
\[
\L^{4g+2b_2(X)}\to H_1(M(K);\L)\xrightarrow{((f_1)_*,(f_2)_*)} H_1(W_1;\L)\oplus H_1(W_2;\L)\to 0.
\]
Note that $W_1=W_2$, $(f_1)_*=(f_2)_*$, and the map $((f_1)_*,(f_2)_*)$ is surjective in the above exact sequence. Therefore one can conclude that $H_1(W_i;\L)=0$, $i=1, 2$. It follows that we have an exact sequence 
\[
\L^{4g+2n}\to H_1(M(K);\L)\to 0,
\]
and the theorem follows.
\end{proof}

\begin{proof}[Proof of Theorem~\ref{theorem:superslice-genus-large}]
Let $J$ be any doubly slice and ribbon knot with nontrivial Alexander polynomial such that $H_1(M(J);\L)$ is cylic. For instance, one can choose $R=9_{46}$ for $J$.
Let~$K$ be the connected sum of $4g+2b_2(X)+1$ copies of $J$. Then, $K$ is doubly slice, ribbon, and the minimal number of generators of $H_1(M(K);\L)$ is $4g+2b_2(X)+1$. It follows from Theorem~\ref{theorem:lower-bound-superslice-genus} that
\[
4g+2b_2(X)+1 \le 4g_s^X(K)+2b_2(X),
\]
and therefore $g_s^X(K)>g$.

There exists an unknotted $2$--sphere $S$ in the bicollar, say $B$, of~$S^3$ in~$DX$ such that $S\cap S^3=K$ since the knot~$K$ is doubly slice. In particular, $\pi_1(B\sm S)\cong\Z$. Since $X$ is simply connected, using the Seifert--Van Kampen Theorem one can readily see that $\pi_1(DX\sm S)\cong \Z$ and therefore $\gds^{DX}(K)=0$.
\end{proof}

We finish the section with the following theorem. 
\begin{theorem}\label{theorem:inequalities-superslice-genus}
Let $K$ be a knot in $S^3$ and $X$ be a simply connected 4-manifold with boundary~$S^3$. Then,
$g_s^X(K)\le g_s^{D^4}(K)$.
\end{theorem}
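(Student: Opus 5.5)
We mimic the proof of Theorem~\ref{theorem:inequalities-double-slice-genus}: take a surface realizing $g_s^{D^4}(K)$, push it into a collar of $S^3$ inside $X$, and check the $\Z$--surface condition in $DX$ with the Seifert--Van Kampen Theorem.

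Let $g=g_s^{D^4}(K)$. Choose a properly embedded surface $F\subset D^4$ with $\partial F=K$ and $\text{genus}(F)=g$ such that $DF$ is a $\Z$--surface in $DD^4=S^4$.

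First we place $F$ inside $X$. Pick a collar $c\colon S^3\times[0,1]\hookrightarrow X$ with $c(S^3\times 0)=\partial X$. Write $D^4=(S^3\times[0,1])\cup_{S^3\times 1} B$, where $B$ is a smaller $4$--ball. After an isotopy rel boundary, $F$ lies in the collar $S^3\times[0,1/2]$ of $D^4$. Transport $F$ by $c$ to a properly embedded surface $F'\subset X$. Then $F'\cap\partial X=K$ and $F'$ misses $X_0:=X\sm c(S^3\times[0,1))$.

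Next we compare the doubles. The double $DX$ contains the bicollar $U=c(S^3\times[0,1])\cup_{\partial X}\bar c(S^3\times[0,1])\cong S^3\times[-1,1]$, and $DF'\subset U$. The same bicollar sits in $S^4=DD^4$, containing $DF$ in the same position. The complement of the bicollar is $B\sqcup\bar B$ in $S^4$ and $X_0\sqcup\bar X_0$ in $DX$. Thus
\[
DX\sm DF'=(U\sm DF')\cup (X_0\sqcup\bar X_0),\qquad S^4\sm DF=(U\sm DF)\cup(B\sqcup\bar B),
\]
where the gluings are along the two boundary $3$--spheres $S^3\times\{\pm1\}$ in both cases.

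Now apply the Seifert--Van Kampen Theorem to both decompositions. The pieces $B,\bar B,X_0,\bar X_0$ are all simply connected, and each is glued along a simply connected $S^3$. So attaching any of them does not change $\pi_1$:
\[
\pi_1(DX\sm DF')\cong\pi_1(U\sm DF')\cong\pi_1(S^4\sm DF)\cong\Z.
\]
Hence $DF'$ is a $\Z$--surface in $DX$, and by definition $g_s^X(K)\le\text{genus}(F')=g$. (By Remark~\ref{remark:unknotting-conjecture}(2), $F'$ is then automatically null-homologous.)

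The main point needing care is the identification $DF'=D(F')$. The double of $X$ reflects across $\partial X$, so the two copies of $F'$ are glued along $K$ by the identity. This matches exactly how $DF$ sits in the bicollar of $S^3\subset S^4$. Consequently the pairs $(U,DF')$ and $(S^3\times[-1,1],DF)$ are homeomorphic, with orientations compatible after reversing the second copy. Beyond this, the argument is formal.
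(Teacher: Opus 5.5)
Your proposal is correct and follows essentially the same route as the paper: take a minimizing $F$ for $g_s^{D^4}(K)$, place it in a collar of $\partial X$, note that $DF$ is a $\Z$--surface in the doubled collar, and use Seifert--Van Kampen together with the simple connectivity of $X$ (glued along $S^3$) to conclude that $DF$ is a $\Z$--surface in $DX$. You spell out details the paper leaves implicit, namely pushing $F$ into the collar and comparing the two complements piece by piece, but the argument is the same.
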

\begin{proof}
The proof is similar as that of Theorem~\ref{theorem:inequalities-double-slice-genus}.
Let $F$ be a properly embedded surface of genus $g_s^{D^4}(K)$ in $D^4$ such that $\partial F=K$ and $DF$ is a $\Z$--surface in the double of $D^4$. 
Let $B$ be a collar of $S^3$ in $X$ and we can embed $F\subset B$. Then, $DF$ is a $\Z$--surface in $DB$. 
One can readily show that $DF$ is a $\Z$--surface in $DX$ using the Seifert-Van Kampen Theorem since $X$ is simply connected. Therefore,
\begin{equation*}
g_s^X(K)\le \text{genus}(F) = g_s^{D^4}(K). \qedhere
\end{equation*}
\end{proof}

\bibliographystyle{amsalpha}
\def\MR#1{}
\bibliography{research}

\end{document}